%% file: main.tex
\documentclass[a4paper, oneside, british, 11pt]{article}

\usepackage{geometry} % page geometry
\usepackage{fullpage} % smaller margins
\usepackage{fontspec} % displays special symbols
\usepackage{babel} % language related package
\usepackage{float} % better figures
\usepackage{authblk} % author's affiliation
\usepackage{lipsum} % Lorem Ipsum
\usepackage{csquotes} % quotations
\usepackage{wrapfig} % wraps text around small figures
\usepackage{datetime} % dates
\usepackage{subcaption} % subfigures and other subcaptions
\usepackage{multicol} % multiple columns of text
\usepackage[bottom]{footmisc} % footnotes at bottom of page
\usepackage[disable]{todonotes} % to do list
\usepackage{hyperref} % clickable links

\usepackage{enumitem} % better enumerate
\setlist[enumerate]{nosep, label=(\roman*), leftmargin=*}

\newcommand{\simnote}[1]{\todo[inline, color=green!20]{{\it Simon:~}#1}}

\usepackage[maxbibnames=999]{biblatex} % bibliography 
\usepackage{graphicx} % images
\graphicspath{{images/}}

\usepackage{amsmath, amssymb, amsfonts}
\usepackage{mathtools}
\usepackage{tikz-cd}
    \usetikzlibrary{arrows,arrows.meta}
    \tikzcdset{arrow style=tikz, diagrams={>=stealth'}}
\usepackage{ebproof}
\usepackage{stmaryrd}

\usepackage{amsthm}

\theoremstyle{plain}
\newtheorem{thm}{Theorem}[section]

\newtheorem{claim}[thm]{Claim}

\newtheorem{prop}[thm]{Proposition}
\newtheorem{lemma}[thm]{Lemma}
\newtheorem{coro}[thm]{Corollary}

\theoremstyle{definition}
\newtheorem{definition}[thm]{Definition}

\theoremstyle{remark}

\newtheorem{rmk}[thm]{Remark}

\renewcommand{\epsilon}{\varepsilon}

\newcommand{\mrm}{\mathrm}

\newcommand{\mcal}{\mathcal}

\newcommand{\mrel}{\mathrel}

\newcommand{\msf}{\mathsf}

\makeatletter
\title{Superamalgamation for modal lattices via non-distributive dualities}
\author[1]{Rodrigo Nicolau Almeida}
\author[1]{Nick Bezhanishvili}
\author[1, 2]{Simon Lemal}
\affil[1]{Institute for Logic, Language and Computation, Universiteit van Amsterdam}
\affil[2]{Department of Mathematics, Université du Luxembourg}
\date{}
\makeatother

\begin{document}

\maketitle

\begin{abstract}
   We show that the variety of modal lattices has the superamalgamation property. As a consequence, we obtain that the weak positive modal logic has the Craig interpolation property. Our proof employs the recent duality for modal lattices based on modal L-spaces. Moreover, we extend this result to a number of other weak positive modal logics axiomatized by modal axioms corresponding to universal Horn sentences.
\end{abstract}

\input{body}

\printbibliography

\end{document}

%% file: body.tex
\section{Introduction}

Superamalgamation is an important property in universal algebra. It is especially crucial in algebraic logic, where it provides an algebraic counterpart of the notion of Craig interpolation. More formally, the standard connection is as follows: a logic $\mcal{L}$ has the Craig interpolation property if and only if its corresponding variety of algebras $\msf{K}(\mcal{L})$ has the superamalgamation property.
We refer to the recent handbook on interpolation \cite{craig2026} for the current state of the art on Craig interpolation, as well as to Chapters \cite{bez2026, fus2026, met2026} of this handbook for detailed discussions of the connections between (super)amalgamation and Craig interpolation.

Maksimova's classical result \cite{mak1977} states that there are exactly seven consistent superintuitionistic logics with Craig interpolation. This result was proved by characterizing all seven nontrivial varieties of Heyting algebras with the amalgamation property (in varieties of Heyting algebras, the notions of amalgamation and superamalgamation coincide); for the proof we refer, for example, to \cite[Section 14.4]{cz1997}. Using similar techniques, Maksimova \cite{mak1979} also showed that there are at most 37 modal logics extending $\mathsf{S4}$ that have the Craig interpolation property (see, e.g., \cite{cz1997, fus2026, gab2005}).

In this paper, we study a natural version of the Craig interpolation property for modal logics lacking an implication connective which are moreover not necessarily distributive.  Such logics were named \emph{weak positive modal logics} in \cite{bez2024}, where ``weak'' refers to the possible lack of the distributivity law. Most such results are proved by establishing that the class of modal lattices has \emph{superamalgamation}. In this context, Jónsson \cite{jon1956} (see also \cite{jip1992}) proved that the variety of lattices has the amalgamation property. Subsequently, \cite{day1984} showed that the only nontrivial varieties of lattices with the amalgamation property are the variety of all lattices and the variety of distributive lattices. Note also that the variety of distributive lattices lacks the superamalgamation property, making the possible lack of distributivity a natural assumption. 

Jónnson's proof is algebraic, making use of filters on lattices; whilst this allows for ample generalization, when considering modalities, these might not be definable on the filters. To overcome this limitation, we approach this problem through duality theory. Duality is a powerful tool that provides topological representations of algebraic structures. For dualities for Boolean algebras, distributive lattices, Heyting algebras, and modal algebras we refer to \cite{dav2002, esa2019, geh2024, ven2007}. In all these cases, duality yields a transparent dual proof of the amalgamation property: briefly, one dualizes all objects and morphisms and then constructs an amalgam using the dual construction of a pullback (in an appropriate category). For proofs in the case of modal algebras, we refer to \cite{bez2026, ven2007}.

In the setting of modal lattices that are not necessarily distributive,  a new duality, resembling the classical dualities in the distributive setting, was recently developed in \cite{bez2024}. We briefly recall the main points of this duality. Given a lattice $L$, one considers the set of all filters of $L$, equipped with the patch topology of the standard Stone topology, ordered by inclusion of filters. This order is easily seen to form a meet semilattice, the topology is compact, and whenever $x\not\preceq y$, then $x$ and $y$ are separated by a clopen filter --- this yields the structure of an \emph{L-space}. Given an L-space $(X, \tau)$, one obtains a lattice by taking the lattice of all clopen filters of $X$. In this way, every lattice is represented as the lattice of clopen filters of its dual L-space. This correspondence can be lifted to a representation and  duality of modal lattices \cite{bez2024}.

Using this duality, and essentially the same ideas as those employed in the distributive setting, we show that the variety of modal lattices has the superamalgamation property. Consequently, the corresponding weak positive modal  logic has the Craig interpolation property. There are both similarities and differences of this approach with the classical case. In the classical setting, one works with the complex algebra of a frame (i.e., the powerset algebra or the algebra of all upsets in the distributive and Heyting cases); these correspond to canonical extensions of modal algebras, distributive lattices or Heyting algebras 
\cite{esa2019, geh2024, ven2007}. The analogue in our setting is the modal lattice of all filters of a meet semilattice, which in turn corresponds to the $\Pi_1$-completions of modal lattices \cite{bez2024}. 

Moreover, as in the classical case, our proof extends to all logics whose frame correspondents are axiomatized by universal Horn sentences \cite{bez2026, ven2007}. We give several illustrative examples in Section \ref{section:supamal_modal}. Surprisingly, the weak positive modal logic axiomatized by the Church--Rosser $\msf{.2}$ axiom $\Diamond\Box p\trianglelefteq\Box\Diamond p$ also has the Craig interpolation property (see Theorem \ref{thm:supamal_ext}). This should be contrasted with the classical case, where the modal logic $\msf{K.2}$ lacks the Craig interpolation property \cite[Section 5.6.2]{marx1995}. Moreover, we show that despite lacking superamalgamation, the logic of distributive lattices enjoys the Craig interpolation property.

The paper is organized as follows. In Section 2, we first recall the syntax and algebraic semantics of weak positive modal logic and then introduce the algebraic counterparts of interpolation related properties. In Section 3, we present a new proof of the superamalgamation property for lattices, first recalling the relevant duality and then using it to establish our result. In Section 4, we build on the results of Section 3 by adding modalities. This is followed by a discussion of the Craig interpolation property for certain extensions of weak positive modal logic. In the final section, we offer some concluding remarks.

\section{Preliminaries}

\subsection{Modal lattices and weak positive modal logic}

We assume the reader is familiar with basic concepts from lattice theory \cite{gra2011} and universal algebra \cite{burris_sanka}. The \emph{language} of modal lattices is given by the BNF:
\begin{equation*}
    p \mid \top \mid \bot \mid \phi\land\psi \mid \phi \lor \psi \mid \Box \phi \mid \Diamond\phi.
\end{equation*}
The language of \emph{lattices} is obtained by omitting all formulas that contain modalities $\Box$ or $\Diamond$. We assume all lattices to be bounded.

\begin{definition}
    Given a set of propositional letters $P$, we let $Fm_P$ be the set of formulas generated from those propositional letters using the language of modal lattices.

    A \emph{consequence pair} is an expression of the form $\phi\trianglelefteq\psi$, with $\phi, \psi\in Fm_P$.
\end{definition}

\begin{definition}
    The deduction rules and axioms for consequence pairs are as follows:

    \begin{center}
    \renewcommand{\arraystretch}{2}
    \begin{tabular}{ccr}
        $p\trianglelefteq\top$, & $\bot\trianglelefteq p$, & \emph{top} and \emph{bottom} \\
        $p\trianglelefteq p$, & \begin{prooftree}
            \hypo{p\trianglelefteq q}\hypo{q\trianglelefteq r}\infer2{p\trianglelefteq r}
        \end{prooftree}, & \emph{reflexivity} and \emph{transitivity} \\
        $p\land q\trianglelefteq p$, & $p\land q\trianglelefteq q$, & \emph{left conjunction rules} \\
        $p\trianglelefteq p\lor q$, & $q\trianglelefteq p\lor q$, & \emph{right disjunction rules} \\
        \begin{prooftree}
            \hypo{p\trianglelefteq q}\hypo{p\trianglelefteq r}\infer2{p\trianglelefteq q\land r}
        \end{prooftree}, & \begin{prooftree}
            \hypo{p\trianglelefteq r}\hypo{q\trianglelefteq r}\infer2{p\lor q\trianglelefteq r}
        \end{prooftree}, & \emph{right conjunction} and \emph{left disjunction} \\
        $\top\trianglelefteq\Box\top$, & $\top\trianglelefteq\Diamond\top$, & \emph{modal top} \\
        \begin{prooftree}
            \hypo{p\trianglelefteq q}\infer1{\Box p\trianglelefteq\Box q}
        \end{prooftree}, & \begin{prooftree}
            \hypo{p\trianglelefteq q}\infer1{\Diamond p\trianglelefteq\Diamond q}
        \end{prooftree}, & \emph{Becker's rules} \\
        $\Box p\land\Box q\trianglelefteq\Box(p\land q)$, & $\Diamond p\land\Box q\trianglelefteq\Diamond(p\land q)$, & \emph{linearity} and \emph{duality}
    \end{tabular}
    \end{center}

    We denote by $\mcal{L}$ the smallest set of consequence pairs closed under uniform substitution and the axioms and rules listed above.

    If $\Gamma$ is a set of consequence pairs, $\mcal{L}(\Gamma)$ denotes the smallest set of modal consequence pairs containing $\Gamma$ and closed under the axioms and rules listed above. Such a set is called a \emph{weak positive modal logic}, or a \emph{logic} for short. We write $\phi\vdash_\Gamma\psi$ if $\phi\trianglelefteq\psi\in\mcal{L}(\Gamma)$.
\end{definition}

\begin{definition}
    A \emph{modal lattice} is a tuple $(A, \top, \bot, \land, \lor, \Box, \Diamond)$ such that $(A, \top, \bot, \land, \lor)$ is a bounded lattice, that the following identities hold:
    \begin{alignat*}{4}
        \top&\approx\Box\top, & \top&\approx\Diamond\top, \\
        \Box a\land\Box b&\approx\Box(a\land b), \qquad&\qquad \Diamond a\lor\Diamond b&\le \Diamond(a\lor b), \\
        \Diamond a\land\Box b&\le \Diamond(a\land b).
    \end{alignat*}

    We denote the category of modal lattices by $\msf{MLat}$, and the category of lattices by $\msf{Lat}$.
\end{definition}

\begin{rmk}
    We note that the dual Dunn axiom, $\Box(a\lor b)\le \Box a\lor \Diamond b$ need not hold \cite[Remark 4.8]{bez2024}, and similarly the symmetric normality axiom for $\Diamond$, namely $\Diamond(a\lor b)\le \Diamond a\lor \Diamond b$, need not hold \cite[Example 4.41]{bez2024}.
\end{rmk}

We can use modal lattices as models of weak positive modal logic: the set $Fm_{P}$ comes naturally equipped with a modal lattice structure, and so given a modal lattice $A$, we call a homomorphism $\sigma\colon Fm_{P}\to A$ a \emph{valuation}.

\begin{definition}
    We say that a modal lattice $A$ \emph{validates} a consequence pair $\phi\trianglelefteq\psi$, written $A\vDash\phi\trianglelefteq\psi$, if for every valuation $\sigma\colon Fm_P\to A$, we have $\sigma(\phi)\le\sigma(\psi)$.
    
    A variety $\msf{K}$ \emph{validates} a consequence pair $\phi\trianglelefteq\psi$, written $\vDash_{\msf{K}}\phi\trianglelefteq\psi$, if for all $A\in\msf{K}$, we have $A\vDash\phi\trianglelefteq\psi$.
\end{definition}

\begin{definition}
    Given a set of consequence pairs $\Gamma$, we denote by $\msf{K}(\Gamma)$ the variety of lattices that validate all the pairs in $\Gamma$. We use $\vDash_\Gamma$ as a shorthand for $\vDash_{\msf{K}(\Gamma)}$.
    
    Given a variety, we denote by $\mcal{L}(\msf{K})$ the set of consequence pairs that are validated by $\msf{K}$. This gives a correspondence between varieties of modal lattices and modal logics.
\end{definition}

\begin{definition}
    Given a variety $\msf{K}$, we denote by $F_\msf{K}(P)$ the free algebra on the set of generators $P$ in $\msf{K}$. When the variety is of the form $\msf{K}(\Gamma)$, we denote the free algebra by $F_\Gamma(P)$. It is obtained from $Fm_P$ by identifying pairs $\phi, \psi$ such that $\phi\vdash_\Gamma\psi$ and $\psi\vdash\phi$.
\end{definition}

\begin{rmk}
    For all of the objects defined above, we drop the subscript $\Gamma$ when $\Gamma$ is the empty set.
\end{rmk}

Free algebras play a key role in the algebraic completeness theorems, and they will be used heavily throughout our work. We collect some basic facts concerning them.

\begin{prop}\label{prop:free_alg}\
\begin{enumerate}
        \item\label{item:prop:free_alg:i} Given $\phi\in Fm_{P}$, and $\mathsf{K}$ a variety of modal lattices, we can identify $\phi$ with $[\phi]_{\mathsf{K}}$, its equivalence class in the free algebra $F_{\mathsf{K}}(P)$. Under this identification, we have that $\phi\vdash_{\mathsf{K}}\psi$ if and only if $\phi\le\psi$ holds in $F_{\mathsf{K}}(P)$.
        %\item\label{item:prop:free_alg:ii} Given a modal lattice $A$, homomorphisms $\tilde{\sigma}\colon F_{\mathsf{K}}(\overline{p})\to A$ are all induced by assignments $\sigma\colon \overline{p}\to A$. We call this an \emph{extension} or a \emph{lift} of $\sigma$.
        \item\label{item:prop:free_alg:iii} If $\sigma_{1}\colon F_{\mathsf{K}}(\overline{p},\overline{q})\to A$ and $\sigma_{2}\colon F_{\mathsf{K}}(\overline{p},\overline{r})\to A$ are homomorphisms which agree on $\overline{p}$, then there is a unique morphism $\sigma\colon F_{\mathsf{K}}(\overline{p},\overline{q},\overline{r})\to A$ which agrees with $\sigma_{1}$ and $\sigma_{2}$ on the restrictions.
\end{enumerate}
\end{prop}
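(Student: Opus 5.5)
Both items are direct consequences of the standard construction of free algebras in a variety, so the plan is to unwind that construction.

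For item (i), I take $F_{\mathsf{K}}(P)$ to be the quotient $Fm_P/\theta_{\mathsf{K}}$, where $\theta_{\mathsf{K}}$ is the congruence identifying $\phi$ and $\psi$ exactly when $\vDash_{\mathsf{K}}\phi\approx\psi$. This is the intersection of the kernels of all valuations into members of $\mathsf{K}$. Equivalently, $F_{\mathsf{K}}(P)$ is the image of $Fm_P$ in the product of all such algebras, by Birkhoff's construction \cite{burris_sanka}.

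The first step is to check that $\theta_{\mathsf{K}}$ is a congruence of modal lattices and that the quotient lies in $\mathsf{K}$. The quotient embeds into a product of members of $\mathsf{K}$, and varieties are closed under $\mathbf{SP}$, so both facts follow.

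Next I identify $\phi$ with its class $[\phi]_{\mathsf{K}}$. In any lattice, $a\le b$ iff $a\land b=a$. Hence $[\phi]\le[\psi]$ in the quotient iff $\phi\land\psi\mathrel{\theta_{\mathsf{K}}}\phi$. That holds iff every valuation $\sigma$ into every $A\in\mathsf{K}$ satisfies $\sigma(\phi)\le\sigma(\psi)$, which is precisely $\vDash_{\mathsf{K}}\phi\trianglelefteq\psi$.

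It remains to match this semantic relation with the syntactic $\vdash_{\mathsf{K}}$, meaning $\vdash_\Gamma$ for $\mathsf{K}=\mathsf{K}(\Gamma)$. This is a soundness and completeness argument carried out inside the Lindenbaum algebra.
\begin{itemize}
\item \emph{Soundness.} Each axiom and rule of $\mathcal{L}$ is valid in every modal lattice; for instance, Becker's rules reflect monotonicity of $\Box$ and $\Diamond$. Closure under substitution is also valid, since precomposing a valuation with a substitution gives another valuation.
\item \emph{Completeness.} The relation ``$\phi\vdash_\Gamma\psi$ and $\psi\vdash_\Gamma\phi$'' is a congruence, using Becker's rules for the modal operations. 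The quotient is a modal lattice validating $\Gamma$, because $\mathcal{L}(\Gamma)$ is closed under substitution. Its canonical valuation then refutes any pair $\phi\trianglelefteq\psi$ that is not derivable.
\end{itemize}

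The main obstacle is the completeness direction: checking that the quotient satisfies the modal lattice identities. For example, $\Box a\land\Box b\approx\Box(a\land b)$ needs both linearity and the converse inequality, and the converse comes from Becker's rule applied to $a\land b\trianglelefteq a$ and $a\land b\trianglelefteq b$.

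For item (ii), I use the universal property of $F_{\mathsf{K}}(\overline p,\overline q,\overline r)$: any map from the generators into $A\in\mathsf{K}$ extends uniquely to a homomorphism. I define the assignment by $p_i\mapsto\sigma_1(p_i)=\sigma_2(p_i)$, $q_j\mapsto\sigma_1(q_j)$ and $r_k\mapsto\sigma_2(r_k)$, which is well defined because $\sigma_1$ and $\sigma_2$ agree on $\overline p$. Let $\sigma$ be the extension. The natural maps $F_{\mathsf{K}}(\overline p,\overline q)\to F_{\mathsf{K}}(\overline p,\overline q,\overline r)$ are induced by the inclusions of generators. Then $\sigma$ composed with such a map agrees with $\sigma_1$ on generators, so it equals $\sigma_1$ by the uniqueness part of the universal property; the same holds for $\sigma_2$. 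Uniqueness of $\sigma$ follows because $\sigma$ is determined by its values on $\overline p,\overline q,\overline r$, and these are all fixed by $\sigma_1$ and $\sigma_2$.
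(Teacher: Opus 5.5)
Your argument is correct, but it is organised differently from the paper. In fact the paper gives no proof: it treats both items as standard, indicates only that $F_\Gamma(P)$ is the quotient of $Fm_P$ by interderivability, and quotes the Algebraic Completeness Theorem from \cite{bez2024} separately.

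\textbf{The paper's implicit route.} With the syntactic quotient as the definition, (i) is a two-line check: $[\phi]\le[\psi]$ iff $\phi$ and $\phi\land\psi$ are interderivable iff $\phi\vdash_\Gamma\psi$, by the conjunction rules, reflexivity and transitivity. The real content then sits in (ii), i.e.\ in the freeness of the Lindenbaum algebra. This needs soundness, so that valuations into $A\in\mathsf{K}(\Gamma)$ factor through interderivability. It also needs the fact that the quotient is a modal lattice validating $\Gamma$. Completeness then comes for free via the canonical valuation.

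\textbf{Your route.} You start from Birkhoff's semantic free algebra instead. This makes (ii) immediate and moves the work into (i), where you must identify the semantic congruence with interderivability. In effect you reprove the completeness theorem that the paper cites.

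\textbf{Comparison.} The ingredients are the same in both routes: soundness of the rules, the modal lattice identities in the Lindenbaum quotient, and substitution closure of $\mathcal{L}(\Gamma)$. Your route has the advantage of explicitly reconciling the paper's two descriptions of $F_{\mathsf{K}}(P)$, as a free algebra and as a quotient by interderivability. The paper's route is shorter, given the citation.

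Finally, in (ii) you are right to take $A\in\mathsf{K}$. The statement leaves this implicit, and Lemma~\ref{lemma:free_amal} supplies it, since there $A$ is the superamalgam. It cannot be dropped: the subalgebra of $A$ generated by both images together need not lie in $\mathsf{K}$, even when each image does.
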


The following theorem is proved in \cite[Theorem 4.13]{bez2024}.

\begin{thm}[Algebraic Completeness Theorem]
    For $\phi,\psi\in Fm_{P}$ and $\Gamma$ a set of formulas, we have $\phi\vdash_\Gamma\psi$ iff $\vDash_\Gamma\phi\trianglelefteq\psi$.
\end{thm}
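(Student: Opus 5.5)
The plan is the standard Lindenbaum--Tarski argument, with soundness and completeness handled separately.

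\emph{Soundness} ($\phi\vdash_\Gamma\psi$ implies $\vDash_\Gamma\phi\trianglelefteq\psi$). Let $V$ be the set of consequence pairs $\phi\trianglelefteq\psi$ such that $\sigma(\phi)\le\sigma(\psi)$ for every $A\in\msf{K}(\Gamma)$ and every valuation $\sigma\colon Fm_P\to A$. Since $\mcal{L}(\Gamma)$ is the smallest set containing $\Gamma$ and closed under the axioms and rules, it suffices to check that $V$ contains $\Gamma$ and has the same closure properties.
\begin{enumerate}
\item $V$ contains $\Gamma$ by the definition of $\msf{K}(\Gamma)$.
\item $V$ is closed under uniform substitution: a substitution followed by a valuation is again a valuation.
\item The lattice axioms and rules (top, bottom, reflexivity, transitivity, conjunction and disjunction rules) hold in every bounded lattice, since $\le$ is a partial order with $\land,\lor$ as meet and join.
\item Modal top, linearity and duality are exactly identities imposed in the definition of a modal lattice.
\item For Becker's rules, note that $\Box a\land\Box b\approx\Box(a\land b)$ makes $\Box$ monotone. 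For $\Diamond$, monotonicity follows from $\Diamond a\lor\Diamond b\le\Diamond(a\lor b)$: if $a\le b$ then $\Diamond a\le\Diamond a\lor\Diamond b\le\Diamond(a\lor b)=\Diamond b$.
\end{enumerate}
Hence $\mcal{L}(\Gamma)\subseteq V$.

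\emph{Completeness} ($\vDash_\Gamma\phi\trianglelefteq\psi$ implies $\phi\vdash_\Gamma\psi$). Define $\phi\equiv\psi$ iff $\phi\vdash_\Gamma\psi$ and $\psi\vdash_\Gamma\phi$.
\begin{enumerate}
\item $\equiv$ is an equivalence relation, by reflexivity and transitivity.
\item $\equiv$ is a congruence. For $\land$ and $\lor$ this uses the left and right conjunction/disjunction rules together with transitivity. For $\Box$ and $\Diamond$ it is exactly Becker's rules.
\item The quotient $F_\Gamma(P)=Fm_P/{\equiv}$, ordered by $[\phi]\le[\psi]$ iff $\phi\vdash_\Gamma\psi$, is a bounded lattice with $[\phi]\land[\psi]=[\phi\land\psi]$ and $[\phi]\lor[\psi]=[\phi\lor\psi]$, by the conjunction and disjunction rules.
\item The modal lattice identities hold in the quotient: the modal top, linearity and duality axioms give them, and the reverse inequality $\Box(a\land b)\le\Box a\land\Box b$ follows from monotonicity of $\Box$.
\item $F_\Gamma(P)\in\msf{K}(\Gamma)$. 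Given a pair $\alpha\trianglelefteq\beta\in\Gamma$ and a valuation $\tau\colon Fm_P\to F_\Gamma(P)$, choose representatives $\tau(p)=[\theta_p]$. Then $\tau(\alpha)=[\alpha[\theta_p/p]]$, and $\alpha[\theta_p/p]\vdash_\Gamma\beta[\theta_p/p]$ by closure under uniform substitution.
\end{enumerate}
Now apply the hypothesis $\vDash_\Gamma\phi\trianglelefteq\psi$ to the canonical valuation $p\mapsto[p]$, which sends each formula $\chi$ to $[\chi]$. This gives $[\phi]\le[\psi]$, that is, $\phi\vdash_\Gamma\psi$.

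The main point requiring care is the last step of completeness, verifying $F_\Gamma(P)\in\msf{K}(\Gamma)$. This needs substitution instances of $\Gamma$ to lie in $\mcal{L}(\Gamma)$, so I would read the definition of $\mcal{L}(\Gamma)$ as including closure under uniform substitution, as it does for $\mcal{L}$. If the variable set $P$ is too small for the pairs in $\Gamma$, enlarge $P$ so that it contains every variable they use; this changes neither derivability nor validity.
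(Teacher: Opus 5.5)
Your Lindenbaum--Tarski argument is correct and is the standard proof; the paper itself only cites \cite[Theorem 4.13]{bez2024} for this result, and its description of $F_\Gamma(P)$ as $Fm_P$ modulo mutual $\vdash_\Gamma$-derivability is exactly your quotient. Your reading of $\mathcal{L}(\Gamma)$ as closed under uniform substitution is genuinely needed for the ``iff''. The one check you left implicit, that $\Diamond a\lor\Diamond b\le\Diamond(a\lor b)$ holds in the quotient, follows at once from Becker's rule for $\Diamond$ and the disjunction rules.
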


\subsection{Craig interpolation and Superamalgamation}

We move on to studying Craig interpolation. Usually Craig interpolation is defined for languages containing an implication connective. The following definition is similar to this by treating the $\vdash$ as a form of implication:

\begin{definition}
    We say that a logic $\mcal{L}(\Gamma)$ has the \emph{Craig interpolation property} if for all $\phi\in Fm_{\overline{p}, \overline{q}}$ and $\psi\in Fm_{\overline{p}, \overline{r}}$ such that $\phi\vdash_\Gamma\psi$, there is a formula $\chi\in Fm_{\overline{p}}$ such that $\phi\vdash_\Gamma\chi$ and $\chi\vdash_\Gamma\psi$. Such a formula $\chi$ is called an \emph{interpolant}.
\end{definition}

\begin{definition}
    Let $\msf{K}$ be a variety of modal lattices. A \emph{V-formation} is a tuple of lattices $K, L_1, L_2\in\msf{K}$ and embeddings $h_i\colon K\to L_i$, as in Figure \ref{subfig:vform}

    A variety $\msf{K}$ of modal lattices has the \emph{amalgamation property} if for every V-formation, there is a lattice $M$, and injective homomorphisms $p_i\colon L_i\to M$ such that $p_1\circ h_1 = p_2\circ h_2$, or in other words, such that the diagram in Figure \ref{subfig:amal} commutes. The lattice $M$ is called an \emph{amalgam}.

    \begin{figure}[h!]
        \centering
        \begin{subfigure}{.49\textwidth}
            \centering
            \begin{tikzcd}
                & \phantom{a} & \\
                L_1 & & L_2 \\
                & K \arrow[lu, "h_1", hook] \arrow[ru, "h_2"', hook] &                             
            \end{tikzcd}
            \caption{V-formation}
            \label{subfig:vform}
        \end{subfigure}
        \hfill
        \begin{subfigure}{.49\textwidth}
        \centering
            \begin{tikzcd}
                & M & \\
                L_1 \arrow[ru, "p_1", hook] & & L_2 \arrow[lu, "p_2"', hook] \\
                & K \arrow[lu, "h_1", hook] \arrow[ru, "h_2"', hook] &                             
            \end{tikzcd}
            \caption{Amalgam}
            \label{subfig:amal}
        \end{subfigure}
        \caption{Amalgamation of lattices}
    \end{figure}
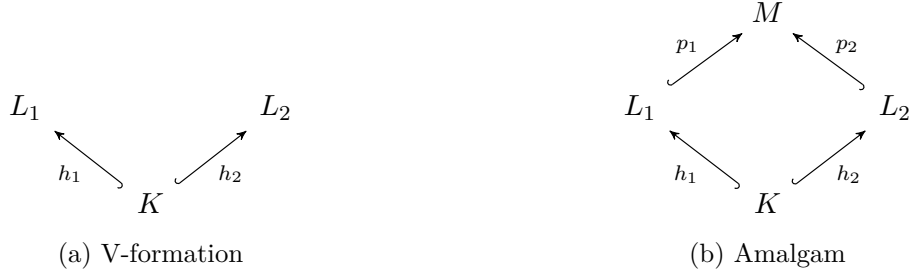

    A variety $\msf{K}$ of modal lattices has the \emph{superamalgamation property} if for every V-formation, there is an amalgam such that $p_1(a)\le p_2(b)$ for $a\in L_1$ and $b\in L_2$ implies the existence of $c\in K$ such that $a\le h_1(c)$ and $h_2(c)\le b$. The lattice $M$ is called a \emph{superamalgam}.
\end{definition}

\begin{definition}
    A variety $\msf{K}$ of modal lattices has \emph{epimorphism surjectivity} if every epimorphism in the category corresponding to the variety is onto.
\end{definition}

\begin{rmk}
    The epimorphism surjectivity property is the algebraic equivalent of the Beth definability property.
    However, we decide to not go into the details of the Beth definability property here (see \cite{hoo2001} for details).
\end{rmk}

\begin{lemma}\label{lemma:free_amal}
    Let $\msf{K}$ be a variety of lattices. The following are equivalent:
    \begin{enumerate}
        \item\label{item:lemma:free_amal:i} the V-formation in Figure \ref{subfig:free_vform} superamalgamates;
        \item\label{item:lemma:free_amal:ii} the amalgamation diagram in Figure \ref{subfig:free_amal} is a superamalgamation;
        \item\label{item:lemma:free_amal:iii} the logic $\mcal{L}(\msf{K})$ has the Craig interpolation property for $\phi\in Fm_{\overline{p}, \overline{q}}$ and $\psi\in Fm_{\overline{p}, \overline{r}}$. That is, any pair $\phi\in Fm_{\overline{p}, \overline{q}}$ and $\psi\in Fm_{\overline{p}, \overline{r}}$ such that $\vDash_\msf{K}\phi\trianglelefteq\psi$ has an interpolant.
    \end{enumerate}
\end{lemma}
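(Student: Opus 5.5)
The proof goes by direct unfolding of the definitions. I take Figure \ref{subfig:free_vform} to be the V-formation of free algebras $F_\msf{K}(\overline{p})\hookrightarrow F_\msf{K}(\overline{p},\overline{q})$ and $F_\msf{K}(\overline{p})\hookrightarrow F_\msf{K}(\overline{p},\overline{r})$, with the maps induced by inclusion of generators. I take Figure \ref{subfig:free_amal} to be this V-formation completed by the inclusions of both into $F_\msf{K}(\overline{p},\overline{q},\overline{r})$.

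The first step is to check that all these maps are embeddings, so that the figures really are a V-formation and an amalgamation diagram. Consider the inclusion $F_\msf{K}(\overline{p},\overline{q})\to F_\msf{K}(\overline{p},\overline{q},\overline{r})$. The homomorphism sending each $r$ to $\top$ and fixing $\overline{p},\overline{q}$ is a left inverse to it, by the universal property of free algebras. Hence the inclusion is injective, and the same argument handles the other maps. Commutativity of the square is immediate, since every map fixes $\overline{p}$.

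For \ref{item:lemma:free_amal:ii}$\Leftrightarrow$\ref{item:lemma:free_amal:iii}, use Proposition \ref{prop:free_alg}\ref{item:prop:free_alg:i}. In $F_\msf{K}(\overline{p},\overline{q},\overline{r})$ we have $\phi\le\psi$ iff $\phi\vdash_\msf{K}\psi$, and by the Algebraic Completeness Theorem this holds iff $\vDash_\msf{K}\phi\trianglelefteq\psi$. An interpolant $\chi\in Fm_{\overline{p}}$ is exactly an element $c\in F_\msf{K}(\overline{p})$ with $\phi\le h_1(c)$ in $F_\msf{K}(\overline{p},\overline{q})$ and $h_2(c)\le\psi$ in $F_\msf{K}(\overline{p},\overline{r})$. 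These inequalities can equivalently be read in the big free algebra, because the inclusions are embeddings. So the superamalgamation condition for the diagram in Figure \ref{subfig:free_amal} is literally the Craig interpolation statement restricted to these variables.

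The implication \ref{item:lemma:free_amal:ii}$\Rightarrow$\ref{item:lemma:free_amal:i} is trivial, since $F_\msf{K}(\overline{p},\overline{q},\overline{r})$ is itself a superamalgam of the V-formation. The main step is \ref{item:lemma:free_amal:i}$\Rightarrow$\ref{item:lemma:free_amal:ii}: we only know that some superamalgam $M$ exists, with embeddings $p_1,p_2$, and must transfer the property to the free amalgam. Let $\phi\in F_\msf{K}(\overline{p},\overline{q})$ and $\psi\in F_\msf{K}(\overline{p},\overline{r})$ with $\phi\le\psi$ in $F_\msf{K}(\overline{p},\overline{q},\overline{r})$. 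Since $p_1\circ h_1=p_2\circ h_2$, the homomorphisms $p_1$ and $p_2$ agree on $\overline{p}$. By Proposition \ref{prop:free_alg}\ref{item:prop:free_alg:iii} there is then a homomorphism $\sigma\colon F_\msf{K}(\overline{p},\overline{q},\overline{r})\to M$ that restricts to $p_1$ and $p_2$. Monotonicity of $\sigma$ gives $p_1(\phi)=\sigma(\phi)\le\sigma(\psi)=p_2(\psi)$. Superamalgamation of $M$ then yields $c\in F_\msf{K}(\overline{p})$ with $\phi\le h_1(c)$ and $h_2(c)\le\psi$, which is what \ref{item:lemma:free_amal:ii} requires.
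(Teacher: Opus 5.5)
Your proposal is correct and follows essentially the same argument as the paper. The key step is the same: extend the embeddings into the given superamalgam $M$ to a single homomorphism $\sigma\colon F_\msf{K}(\overline{p},\overline{q},\overline{r})\to M$ via Proposition~\ref{prop:free_alg}, transport the inequality, and apply superamalgamation; the paper uses this for (i)$\Rightarrow$(iii), while you use it for (i)$\Rightarrow$(ii), getting (ii)$\Leftrightarrow$(iii) by unfolding definitions. Your extra check that the inclusions are injective, via the retraction sending $\overline{r}$ to $\top$, fills in a detail the paper leaves implicit.
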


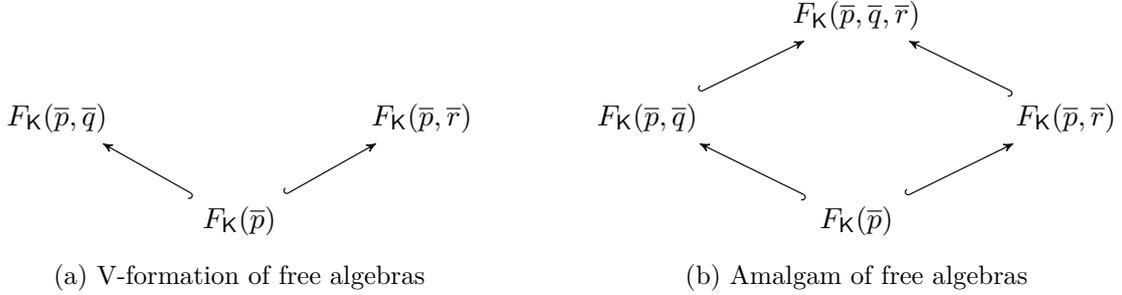
\begin{figure}[h!]
    \centering
    \begin{subfigure}{.49\textwidth}
        \centering
        \begin{tikzcd}
            & \phantom{a} & \\
            F_\msf{K}(\overline{p}, \overline{q}) & & F_\msf{K}(\overline{p}, \overline{r}) \\
            & F_\msf{K}(\overline{p}) \arrow[lu, hook] \arrow[ru, hook] &        
        \end{tikzcd}
        \caption{V-formation of free algebras}
        \label{subfig:free_vform}
    \end{subfigure}
    \hfill
    \begin{subfigure}{.49\textwidth}
    \centering
        \begin{tikzcd}
            & F_\msf{K}(\overline{p}, \overline{q}, \overline{r}) & \\
                F_\msf{K}(\overline{p}, \overline{q}) \arrow[ru, hook] & & F_\msf{K}(\overline{p}, \overline{r}) \arrow[lu, hook] \\
            & F_\msf{K}(\overline{p}) \arrow[lu, hook] \arrow[ru, hook] &                         
        \end{tikzcd}
        \caption{Amalgam of free algebras}
        \label{subfig:free_amal}
    \end{subfigure}
    \caption{Craig interpolation and superamalgamation on free algebras}
\end{figure}

\begin{proof}
    That \ref{item:lemma:free_amal:iii} implies \ref{item:lemma:free_amal:ii} follows simply by unfolding the definitions. That \ref{item:lemma:free_amal:ii} implies \ref{item:lemma:free_amal:i} is trivial. Let us prove that \ref{item:lemma:free_amal:i} implies \ref{item:lemma:free_amal:iii}. Assume that we have a superamalgam $M$ with two monos $\sigma_1\colon F_\msf{K}(\overline{p}, \overline{q})\to M$ and $\sigma_{2}\colon F_\msf{K}(\overline{p}, \overline{r})\to M$ which agree on $F_\msf{K}(\overline{p})$. Assume that $\phi\in F_\msf{K}(\overline{p}, \overline{q})$ and $\psi\in F_\msf{K}(\overline{p}, \overline{r})$ are such that $\phi\vdash_\msf{K}\psi$. Then by  Proposition \ref{item:prop:free_alg:iii}, there is a map $\sigma\colon F_\msf{K}(\overline{p}, \overline{q}, \overline{r})\to M$ extending $\sigma_{1}, \sigma_{2}$. As $\phi\vdash_\msf{K}\psi$, we have $\sigma_{1}(\phi) = \sigma(\phi)\le\sigma(\psi) = \sigma_{2}(\psi)$. Therefore by superamalgamation, there is $\chi\in F_\msf{K}(\overline{p})$ such that $\phi\le\chi$ in $F_\msf{K}(\overline{p}, \overline{q})$, and $\chi\le\psi$ in $F_\msf{K}(\overline{p}, \overline{r})$. By Proposition \ref{item:prop:free_alg:i}, this implies that $\phi\vdash_\msf{K}\chi$ and $\chi\vdash_\msf{K}\psi$, thus $\chi$ is the interpolant for $\phi$ and $\psi$.
\end{proof}

Recall that a bounded lattice $L$ is called a \emph{Heyting algebra} if for all $a,b\in L$ there is an element $a\rightarrow b$ such that for all $c\in L$:
\[ a\land c\le b \qquad\text{iff}\qquad c\le a\rightarrow b. \]
For basic facts about Heyting algebras see \cite[Chapter 7]{cz1997}.

Recall that \emph{positive logic} is the logic of distributive lattices, see, e.g., \cite{cel1999} for more details.

\begin{prop}\label{coro:pos_int}
    Positive logic has Craig interpolation.
\end{prop}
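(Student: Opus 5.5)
The proof goes through Lemma \ref{lemma:free_amal}. Since distributive lattices lack superamalgamation in general, we only need the special V-formation of free distributive lattices in Figure \ref{subfig:free_vform}, or a direct syntactic argument. The context suggests the intended route: free distributive lattices on finitely many generators are finite, hence Heyting algebras, and Heyting algebras do have interpolation. Let $D$ be the variety of distributive lattices. Take $\phi\in Fm_{\overline{p},\overline{q}}$ and $\psi\in Fm_{\overline{p},\overline{r}}$ (lattice formulas) with $\phi\vdash_D\psi$. We must find $\chi\in Fm_{\overline{p}}$ with $\phi\vdash_D\chi$ and $\chi\vdash_D\psi$.

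The simplest plan is normal forms. Write $\phi$ in disjunctive normal form, $\phi=\bigvee_i (\alpha_i\land\beta_i)$, where $\alpha_i$ is a conjunction of letters from $\overline{p}$ and $\beta_i$ a conjunction of letters from $\overline{q}$. Write $\psi$ in conjunctive normal form, $\psi=\bigwedge_j(\gamma_j\lor\delta_j)$, where $\gamma_j$ is a disjunction of $\overline{p}$-letters and $\delta_j$ a disjunction of $\overline{r}$-letters. Empty conjunctions are $\top$ and empty disjunctions are $\bot$.

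By distributivity, $\phi\le\psi$ holds iff for all $i,j$ we have $\alpha_i\land\beta_i\le\gamma_j\lor\delta_j$. In a distributive lattice, a conjunction of generators lies below a disjunction of generators exactly when they share a letter. This is witnessed in the two-element lattice: send the conjunct letters to $1$ and everything else to $0$. It is the key fact, and the step I expect to need the most care, including the constant cases. Since the letters of $\beta_i$ come from $\overline{q}$ and those of $\delta_j$ from $\overline{r}$, any shared letter must be a $\overline{p}$-letter. So $\alpha_i\le\gamma_j$ for all $i,j$.

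Now set $\chi=\bigvee_i\alpha_i\in Fm_{\overline{p}}$. Then $\phi\le\chi$, because each disjunct satisfies $\alpha_i\land\beta_i\le\alpha_i$. And $\chi\le\bigwedge_j\gamma_j\le\psi$, since each $\alpha_i\le\gamma_j$. Completeness gives the derivations $\phi\vdash\chi$ and $\chi\vdash\psi$.

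As an alternative I would keep in reserve, one can use the Heyting-algebra route. The finite free algebra $F_D(\overline{p},\overline{q},\overline{r})$ is a Heyting algebra. Superamalgamation holds for the injective (Heyting-restricted) V-formation of free algebras, via the known interpolation of intuitionistic logic restricted to positive formulas. Lemma \ref{lemma:free_amal} then converts this into Craig interpolation. The direct normal-form argument above avoids that dependency, so it is the one I would write.
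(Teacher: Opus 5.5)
Your argument is correct, but it takes a different route from the paper. The paper's proof reduces to cited results. By Lemma~\ref{lemma:free_amal} it suffices to superamalgamate the V-formation of free distributive lattices. These are finite, hence Heyting algebras, and the inclusions $F_{\msf{DLat}}(p_1,\dots,p_n)\hookrightarrow F_{\msf{DLat}}(p_1,\dots,p_{n+1})$ are Heyting homomorphisms, a nontrivial fact the paper cites. So Maksimova's superamalgamation theorem for Heyting algebras applies. That is essentially your reserve route. As you sketch it, though, it skips exactly the step that needs the citation: an intuitionistic interpolant may contain $\to$, and to pull it back into $F_{\msf{DLat}}(\overline{p})$ you need the inclusions to preserve $\to$.

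Your main argument is self-contained. It uses normal forms, the reduction of $\bigvee_i x_i\le\bigwedge_j y_j$ to pairwise comparisons, and the shared-letter criterion checked by a two-valued valuation; your handling of empty meets and joins is fine. It gives an explicit interpolant and avoids Maksimova's theorem altogether. By Lemma~\ref{lemma:free_amal} it still shows that the free V-formation superamalgamates, so for this proposition nothing from the paper's approach is lost.

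Your interpolant $\bigvee_i\alpha_i$ is exactly $\phi[\top/\overline{q}]$, so it does not depend on $\psi$ (it is a uniform interpolant). This gives an even shorter proof. First, $\phi\le\phi[\top/\overline{q}]$ holds in every lattice, because terms are monotone in each variable. Second, $\phi[\top/\overline{q}]\le\psi[\top/\overline{q}]=\psi$, because valid inequalities are closed under substitution. This version needs neither normal forms nor distributivity, only monotonicity of the connectives and the constant $\top$.
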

\begin{proof}
    We denote the variety of distributive lattices by $\msf{DLat}$.
    By Lemma \ref{lemma:free_amal}, it is sufficient to show that the class of free distributive lattices has superamalgamation. However, any free distributive lattice is a Heyting algebra, and the inclusion of $\iota\colon F_\msf{DLat}(\{p_{1},\dots,p_{n}\})\to F_\msf{DLat}(\{p_{1},\dots,p_{n},p_{n+1}\})$ is a Heyting algebra homomorphism  \footnote{For a proof and references to this result, see \cite[Prop. 2.12]{alm2024}.}, and the variety of Heyting algebras has superamalgamation \cite{mak1977}.
\end{proof}

The following gives us a natural connection between superamalgamation, Craig interpolation and epimorphism surjectivity:

\begin{thm}\label{thm:inter_amal}
    Let $\msf{K}$ be a variety of modal lattices and $\mcal{L}(\msf{K})$ be the corresponding logic.
    \begin{enumerate}
        \item\label{item:thm:inter_amal:i} If $\msf{K}$ has superamalgamation, then $\mcal{L}(\msf{K})$ has Craig interpolation.

        \item\label{item:thm:inter_amal:ii} If $\msf{K}$ has superamalgamation, then it has epimorphism surjectivity.
    \end{enumerate}
\end{thm}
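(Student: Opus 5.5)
Part \ref{item:thm:inter_amal:i} follows from Lemma \ref{lemma:free_amal}. Fix $\phi\in Fm_{\overline{p},\overline{q}}$ and $\psi\in Fm_{\overline{p},\overline{r}}$ with $\phi\vdash_{\mcal{L}(\msf{K})}\psi$; only finitely many letters occur, so the tuples may be taken finite. The inclusions $F_\msf{K}(\overline{p})\to F_\msf{K}(\overline{p},\overline{q})$ and $F_\msf{K}(\overline{p})\to F_\msf{K}(\overline{p},\overline{r})$ are injective. Indeed, each has a left inverse: the homomorphism sending each $q_i$ (resp.\ $r_i$) to $\top$ and fixing $\overline{p}$, which exists by freeness. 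So Figure \ref{subfig:free_vform} is a genuine V-formation in $\msf{K}$. Superamalgamation of $\msf{K}$ gives a superamalgam for it, which is condition \ref{item:lemma:free_amal:i} of Lemma \ref{lemma:free_amal}. Condition \ref{item:lemma:free_amal:iii} then gives an interpolant for $\phi,\psi$. The algebraic completeness theorem converts between $\vdash$ and $\vDash_\msf{K}$ where needed. Hence $\mcal{L}(\msf{K})$ has Craig interpolation.

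For part \ref{item:thm:inter_amal:ii} I would use the standard argument that superamalgamation yields the ``strong amalgamation'' needed to detect non-surjective epis. Let $f\colon A\to B$ be an epimorphism in $\msf{K}$. Replace $A$ by its image $K=f[A]\le B$. Since $f$ factors as $A\twoheadrightarrow K\hookrightarrow B$ and $f$ is epi, the inclusion $K\hookrightarrow B$ is also epi. So it suffices to show that an epic subalgebra inclusion $K\le B$ is onto.

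Consider the V-formation $K\hookrightarrow B$, $K\hookrightarrow B$ (two copies of $B$). Let $p_1,p_2\colon B\to M$ be a superamalgam. Then $p_1$ and $p_2$ agree on $K$, so epicness gives $p_1=p_2$. Now take any $b\in B$. We have $p_1(b)\le p_2(b)$, so superamalgamation yields $c\in K$ with $b\le c$ in the first copy. Similarly, $p_1(b)\le p_2(b)$ read in the other direction gives $c'\in K$ with $c'\le b$; this is not yet enough. The better move is to apply the superamalgamation condition twice, to $p_1(b)\le p_2(b)$ and to $p_2(b)\le p_1(b)$, and then use the order in $B$. A cleaner route is the well-known implication from superamalgamation to \emph{strong amalgamation}: $p_1[L_1]\cap p_2[L_2]=p_1h_1[K]$. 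Indeed, if $p_1(a)=p_2(b)$, then from $p_1(a)\le p_2(b)$ we obtain $c\in K$ with $a\le c$ and $c\le b$. From $p_2(b)\le p_1(a)$, the symmetric form of superamalgamation (the definition is stated with the roles of $L_1,L_2$ interchangeable; I would note that it can be symmetrized by applying it to the swapped V-formation) gives $c'\in K$ with $b\le c'\le a$. Since $p_1$ and $p_2$ are injective and agree on $K$, we get $p_1(a)\le p_1(c)=p_2(c)\le p_2(b)=p_1(a)$, so $p_1(a)=p_1(c)$ and $a=c\in K$.

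Applying strong amalgamation to the $B,B$ formation: for $b\in B$ we have $p_1(b)=p_2(b)$ because $p_1=p_2$, so $b\in K$. Hence $K=B$ and $f$ is surjective. The main obstacle I expect is the asymmetry of the superamalgamation definition as stated, which only handles $p_1(a)\le p_2(b)$. I would address it by observing that a superamalgam for the swapped V-formation can be used, or by verifying that the amalgams constructed later satisfy both directions. For the argument above, it suffices that the definition be applied to the V-formation with $L_1=L_2=B$, where swapping changes nothing. Indeed, with $L_1=L_2$, the inequality $p_2(b)\le p_1(a)$ becomes $p_1(b)\le p_2(a)$ once we use $p_1=p_2$. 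So the one-sided condition already suffices, and no symmetrization is needed.
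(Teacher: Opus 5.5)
Your proposal is correct. Part (i) is the paper's argument (the paper simply invokes Lemma~\ref{lemma:free_amal}), and your check that the free-algebra inclusions are injective, via the retraction sending the extra generators to $\top$, fills in a detail the paper leaves implicit.

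For part (ii) the paper gives no argument and cites \cite[Corollary 2.5.23]{hoo2001}, whereas you prove it directly. You reduce to an epic subalgebra inclusion $K\le B$, superamalgamate the V-formation $B\hookleftarrow K\hookrightarrow B$, and use epicness to get $p_1=p_2$. This is essentially the standard argument behind the cited result: superamalgamation implies strong amalgamation, which implies epimorphism surjectivity. Your route buys self-containment at the cost of a few lines. It does rely on the superamalgam $M$ lying in $\msf{K}$, since an epimorphism of $\msf{K}$ can only be cancelled against homomorphisms into members of $\msf{K}$. You should state explicitly that you read the paper's ``a lattice $M$'' in this standard way.

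Your argument can also be shortened. When you say the first application of superamalgamation yields only $b\le c$ and is ``not yet enough'', you are misreading the definition. It produces a single $c\in K$ with both $a\le h_1(c)$ and $h_2(c)\le b$. Applied to $p_1(b)\le p_2(b)$ with $L_1=L_2=B$ and $h_1=h_2$ the inclusion, this gives $b\le c\le b$ in $B$, hence $b=c\in K$ at once. That route needs neither strong amalgamation nor injectivity of the $p_i$.

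Similarly, in your strong amalgamation step the element $c'$ and the discussion of symmetrizing the definition are never used. The chain $p_1(a)\le p_1(c)=p_2(c)\le p_2(b)=p_1(a)$ only needs the $c$ obtained from $p_1(a)\le p_2(b)$.
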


\begin{proof}
    \ref{item:thm:inter_amal:i} This follows from Lemma \ref{lemma:free_amal} that $\mcal{L}(\msf{K})$ has Craig interpolation.

    \ref{item:thm:inter_amal:ii} This is given in \cite[Corollary 2.5.23]{hoo2001}.
\end{proof}

\begin{rmk}
    The converse to \ref{item:thm:inter_amal:i} is true when $\msf{K}$ is a variety of Heyting algebras. Note that this is necessary for Craig interpolation to imply superamalgamation. Indeed, Corollary \ref{coro:pos_int} tells us that the logic of distributive lattices has Craig interpolation. However, distributive lattices do not have epimorphism surjectivity, and therefore, fail to have superamalgamation. Indeed, the inclusion from the $3$-chain to the $4$ elements Boolean algebra (see Figure \ref{fig:fail_epi_surj}) depicted below is an epimorphism, but it is not surjective.
    
    \begin{figure}[h]
        \centering
        \begin{tikzpicture}
            \node at (0,0) {$\bullet$};
            \node at (-1,1) {$\bullet$};
            \node at (0,2) {$\bullet$};
            \node at (3,0) {$\bullet$};
            \node at (2,1) {$\bullet$};
            \node at (4,1) {$\bullet$};
            \node at (3,2) {$\bullet$};
        
            \draw (0,0) -- (-1,1) -- (0,2);
        
            \draw (3,0) -- (2,1) -- (3,2) -- (4,1) -- (3,0);
        
            \draw [dotted,->] (0.3,0) -- (2.7,0);
        
            \draw [dotted,->] (-0.7,1) -- (1.7,1);
        
            \draw[dotted,->] (0.3,2) -- (2.7,2);
        \end{tikzpicture}
        \caption{An epimorphism which is not surjective}
        \label{fig:fail_epi_surj}
    \end{figure}
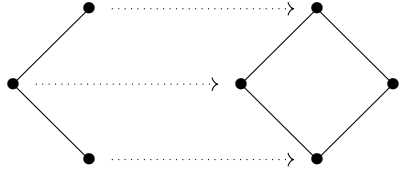    
\end{rmk}

\section{Superamalgamation of lattices}

In this section, we focus on (\emph{non modal}) lattices. While none of the results presented in this section are novel, their proofs are new, and they serve as a convenient stepping stone for the results in the next section.

\subsection{Duality for lattices}

We now recall the duality for lattices and modal lattices developed in \cite{bez2024}. As it is obtained by restricting HMS (Hoffman-Mislove-Stralka) duality \cite{hms1974}, let us first recall the concepts at play there.

\begin{definition}
    An \emph{L-frame} is a meet semilattice $(X, 1, \curlywedge)$. We denote its underlying order by $\preceq$. Given an L-frame $X$, we denote by $\msf{Fil_F}(X)$ its set of filters, which is a complete lattice.
\end{definition}

\begin{definition}
    An \emph{HMS-space} is a tuple $(X, 1, \curlywedge, \tau)$ such that $(X, 1, \curlywedge)$ is an L-frame and $(X, \tau)$ is a compact topological space such that whenever $x\not\preceq y$, there is some clopen filter $U\subseteq X$ such that $x\in U$ and $y\not\in U$. An \emph{HMS-morphism} is a continuous meet semilattice homomorphism. We write $\msf{HMS}$ for the category of HMS-spaces and HMS-morphisms.
\end{definition}

\begin{thm}\label{thm:HMS_duality}
    The category $\msf{HMS}$ is dually equivalent to the category $\msf{MSL}$ of meet semilattices.
\end{thm}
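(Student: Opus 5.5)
This is Hofmann--Mislove--Stralka duality, and I would prove it by building the two contravariant functors explicitly and checking that the unit and counit are natural isomorphisms. I expect to need the meet semilattices of $\msf{MSL}$ to be bounded, with a top element $1$, so that they match the $1$ of an HMS-space.

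\emph{Functors.} For a meet semilattice $S$, let $\msf{Fil_F}(S)$ be its set of filters. Order it by inclusion, with meet given by intersection and top element $S$. Give it the topology generated by the subbasic sets $\widehat{a}=\{F : a\in F\}$ and their complements, for $a\in S$. This space embeds as a subspace of $2^S$ via characteristic functions. The filter conditions (containing $1$, being upward closed, being closed under $\wedge$) each cut out a closed set, so $\msf{Fil_F}(S)$ is a closed subspace of $2^S$ and hence compact. Each $\widehat{a}$ is clopen, and it is a filter of the L-frame: it contains $S$, is upward closed under inclusion, and is closed under intersection. If $F\not\subseteq G$, pick $a\in F\setminus G$; then $\widehat{a}$ separates $F$ from $G$. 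So $\msf{Fil_F}(S)$ is an HMS-space.

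A homomorphism $h\colon S\to T$ induces $h^{-1}\colon \msf{Fil_F}(T)\to\msf{Fil_F}(S)$. This map preserves intersections and the top element, and it is continuous because $(h^{-1})^{-1}(\widehat{a})=\widehat{h(a)}$. In the other direction, an HMS-space $X$ is sent to the meet semilattice $\msf{ClopFil}(X)$ of its clopen filters, ordered by inclusion with intersection as meet. An HMS-morphism $f$ is sent to preimage $f^{-1}$, which preserves clopen filters because $f$ is a continuous semilattice homomorphism.

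\emph{Unit on the algebra side.} I would show that $a\mapsto\widehat{a}$ is an isomorphism $S\to\msf{ClopFil}(\msf{Fil_F}(S))$.
\begin{itemize}
\item It is a homomorphism, since $\widehat{a\wedge b}=\widehat a\cap\widehat b$.
\item It is injective, since principal filters separate elements: if $a\not\le b$ then ${\uparrow}a\in\widehat a\setminus\widehat b$.
\item For surjectivity, take a clopen filter $U$ of $\msf{Fil_F}(S)$. By compactness, $U$ is a finite union of basic sets $\widehat{a}\cap\bigcap_j(\widehat{b_j})^c$. Every filter lies above the principal filters of its elements, and $U$ is upward closed, so $U$ is determined by the principal filters it contains. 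The set $\{a : {\uparrow}a\in U\}$ is a filter $F_0$ of $S$, and $F_0$ itself lies in $U$ because $U$ is closed under the directed unions that arise.
\end{itemize}
The key lemma for this last point, and for the space-side argument below, is that a clopen filter $U$ has a least element. The argument: $U$ is closed and closed under binary meets, the meet map is continuous in the patch topology, and a compact semilattice has a least element in any closed subsemilattice, namely the limit of the net of finite meets. If $U$ has least element $F_0={\uparrow}a$, then $U=\widehat{a}$. I still need to argue that $F_0$ is principal. Openness of $U$ gives a basic neighbourhood $\widehat{a}\cap\bigcap_j\widehat{b_j}^c\subseteq U$ around $F_0$, and ${\uparrow}a$ lies in it, so ${\uparrow}a\in U$. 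Since $F_0$ is least in $U$, this gives $F_0={\uparrow}a$.

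\emph{Counit on the space side.} For an HMS-space $X$, define $\varepsilon\colon X\to\msf{Fil_F}(\msf{ClopFil}(X))$ by $x\mapsto\{U : x\in U\}$.
\begin{itemize}
\item It is a semilattice homomorphism, because a filter $U$ contains $x\curlywedge y$ if and only if it contains both $x$ and $y$.
\item It is continuous by the definition of the topology.
\item It is injective and order-reflecting by the separation axiom.
\end{itemize}
Surjectivity is the main obstacle. Given a filter $\mathcal F$ of clopen filters, consider $\bigcap\mathcal F$. This is nonempty by compactness, since $\mathcal F$ has the finite intersection property. It is a closed subsemilattice, so it has a least element $x$. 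Then $\varepsilon(x)\supseteq\mathcal F$. For the reverse inclusion, suppose $U\ni x$ with $U\notin\mathcal F$. Each $V\in\mathcal F$ is a clopen filter, so ${\downarrow}$-argument: separation gives, for each $y$ in the closed set $(\bigcap\mathcal F)$, ... Concretely, I would show that the closed set $\bigcap\mathcal F\setminus U$ is empty. If $y$ lies in it then $x\preceq y$, and $U$ is an upset containing $x$, so $y\in U$, a contradiction. Hence $\bigcap\mathcal F\subseteq U$, and compactness yields finitely many $V_i\in\mathcal F$ with $\bigcap V_i\subseteq U$. Since $\mathcal F$ is upward closed and meet-closed, this gives $U\in\mathcal F$.

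A continuous bijection from a compact space to a Hausdorff space is a homeomorphism. The target is Hausdorff because it is a subspace of $2^S$. Together with the naturality checks, which are routine, this completes the dual equivalence.
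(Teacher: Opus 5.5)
Your proposal is correct and uses essentially the paper's approach. Your functors are exactly the paper's: filters with the topology generated by the sets $\varphi(a)$ and their complements, clopen filters, and preimage maps on morphisms. The paper stops there and cites Hofmann--Mislove--Stralka for the fact that the unit and counit are isomorphisms. You verify this directly, and your least-element arguments for both $a\mapsto\widehat a$ and $\varepsilon$ go through.

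There is one false sentence, which you should delete. In the surjectivity bullet, $\{a : {\uparrow}a\in U\}$ is not a filter of $S$ but a down-set. This is because $a\mapsto{\uparrow}a$ reverses order and $U$ is upward closed under inclusion; for $S=\{0,1\}$ and $U=\widehat 0$ it is $\{0\}$. Likewise, closure of $U$ under directed unions is trivial and beside the point. What you need is closure under codirected intersections, i.e.\ $\bigcap U\in U$, which is exactly your key lemma, so let that lemma carry the step.

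You also apply the lemma inside an abstract HMS-space $X$ for the counit, where continuity of $\curlywedge$ is not part of the paper's definition (though it follows from it). It is cleaner to justify the lemma there by noting that each ${\downarrow}c=\bigcap\{X\setminus V : V\in\msf{ClopFil}(X),\ c\notin V\}$ is closed. Then a cluster point of the net of finite meets is a lower bound lying in the closed subsemilattice.
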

\begin{proof}
    For a complete proof see \cite{hms1974}. We briefly recall the functors involved in this duality. One part is the assignment $\mathsf{Fil}\colon \mathsf{MSL}\to \mathsf{HMS}$, that sends a lattice to its set of filters. Given a meet semilattice $L$, the set $\mathsf{Fil}(L)$ is a meet semilattice, with meet given by intersection. We equip it with a Stone like topology $\tau$ given by declaring the sets
    \begin{equation*}
        \varphi(a)\coloneq \{F\in \mathsf{Fil}(L) \mid a\in F\},
    \end{equation*}
    and its complements to be clopen. We obtain that $(\mathsf{Fil}(L),L,\subseteq,\tau)$ is an HMS-space. Conversely, if $(X, \tau)$ is an HMS-space, the set $\mathsf{ClopFil}(X)$ of clopen filters forms a meet semilattice. Given a meet semilattice homomorphism $h\colon K\to L$, the map $h^{-1}\colon \mathsf{Fil}(L)\to \mathsf{Fil}(K)$ will be a continuous meet semilattice map. And given an HMS-morphism $f\colon X\to Y$, the map $f^{-1}\colon \mathsf{ClopFil}(Y)\to \mathsf{ClopFil}(X)$ will be a meet semilattice homomorphism.
\end{proof}

In the setting of weak positive logic, the morphisms of interest between L-frames satisfy more requirements:

\begin{definition}
    An \emph{L-morphism} between L-frames $X$ and $Y$ is a semilattice homomorphism that satisfies for all $x\in X$ and $y', z'\in Y$:

    \begin{enumerate}
        \item if $f(x)=1$, then $x=1$;
        \item if $y'\curlywedge z'\preceq f(x)$, then there are $y, z\in X$ such that $y'\preceq f(y)$, $z'\preceq f(z)$ and $y\curlywedge z\preceq x$ (see Figure \ref{fig:L-mor}).
    \end{enumerate}

    We denote the category of L-frames and L-morphisms by $\msf{LFrm}$.
\end{definition}

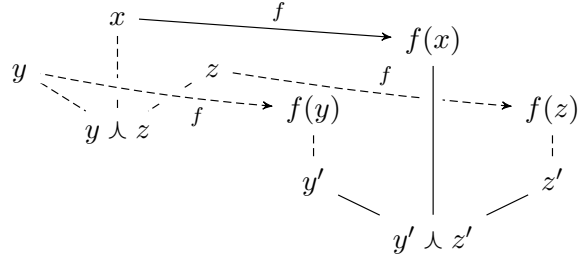
\begin{figure}[h]
    \centering
    \begin{tikzcd}[scale=.95]
        & [-1.3em]
          x \arrow[rrrd, "f"]
            \arrow[dddd, dashed, -]
        & [-1.4em]
        & [-1em]
        & [-1.5em]
        & [-1.5em] \\ [-2.5em]
        &
        &
        &
        & f(x)
        & \\ [-2.2em]
      y     \arrow[ddr, dashed, -]
        &
        & z \arrow[ddl, dashed, -]
            \arrow[rrrd, bend left=-2, dashed, "f"]
        &&& \\ [-2.1em]
        &
        &
        & f(y) \arrow[dd, dashed, -]
               \arrow[lllu, bend left=3, dashed, <-, crossing over, "f" pos=.25]
        &
        & f(z) \arrow[dd, dashed, -] \\ [-2.6em]
        & y \curlywedge z
        &&&& \\ [-1.7em]
        &
        &
        & y' \arrow[dr, -]
        &
        & z' \arrow[dl, -] \\ [-1.5em]
        &
        &
        &
        & y' \curlywedge z'  \arrow[uuuuu, crossing over, -]
        &
    \end{tikzcd}
    \caption{The second L-morphism condition}
    \label{fig:L-mor}
\end{figure}

The definition of L-morphism ensures that whenever $f\colon X\to Y$ is an L-morphism, then $f^{-1}\colon\msf{Fil_F}(Y)\to\msf{Fil_F}(X)$ is a lattice homomorphism. This gives rise to a functor $\msf{Fil_F}\colon\msf{LFrm}\to\msf{Lat}$.

HMS duality can be restricted to lattices:

\begin{definition}
    An \emph{L-space} is an HMS-space $(X, 1, \curlywedge, \tau)$ such that $\{1\}$ is clopen and \[ U\lor V\coloneq \{x\in X \mid \exists y\in U, z\in V~\textrm{such that}~y \curlywedge z\preceq x\}, \] the filter generated by $U\cup V$, is clopen whenever $U, V$ are clopen filters.

    An \emph{L-space morphism} is a continuous L-morphism. We write $\msf{LSpace}$ for the category of L-spaces and L-space morphisms.
\end{definition}

\begin{thm}
    The category $\msf{LSpace}$ is dually equivalent to the category $\msf{Lat}$ of lattices.
\end{thm}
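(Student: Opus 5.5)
This duality is obtained by restricting the HMS duality of Theorem \ref{thm:HMS_duality} along the forgetful functor $\msf{Lat}\to\msf{MSL}$. We therefore keep the functors $\mathsf{Fil}$ and $\mathsf{ClopFil}$ and check three things: they land in the right subcategories, they act correctly on morphisms, and the unit and counit isomorphisms of the HMS duality become isomorphisms in $\msf{Lat}$ and $\msf{LSpace}$.

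\textbf{Objects.} Let $L$ be a bounded lattice. The top element of $\mathsf{Fil}(L)$ is the filter $L$ itself. The equality $\{L\}=\varphi(\bot)$ shows that $\{1\}$ is clopen. By HMS duality the clopen filters of $\mathsf{Fil}(L)$ are exactly the sets $\varphi(a)$ for $a\in L$. We then verify
\[ \varphi(a)\lor\varphi(b)=\varphi(a\lor b). \]
For $\subseteq$: if $F\cap G\subseteq H$ with $a\in F$ and $b\in G$, then $a\lor b\in F\cap G\subseteq H$. For $\supseteq$: if $a\lor b\in H$, take $F={\uparrow}a$ and $G={\uparrow}b$; then $F\cap G={\uparrow}(a\lor b)\subseteq H$. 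Hence $\mathsf{Fil}(L)$ is an L-space.

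Conversely, let $X$ be an L-space. The clopen filters $\mathsf{ClopFil}(X)$ form a meet semilattice under intersection, with top element $X$. The L-space axiom says that $U\lor V$ is again clopen, and $U\lor V$ is the least filter containing $U\cup V$, so it is the join. The bottom element is $\{1\}$, which is a clopen filter. Thus $\mathsf{ClopFil}(X)$ is a bounded lattice. The HMS isomorphism $L\cong\mathsf{ClopFil}(\mathsf{Fil}(L))$, $a\mapsto\varphi(a)$, preserves meets. By the computation above it also preserves binary joins, and it sends $\bot$ to $\{1\}$, so it is a lattice isomorphism. On the space side, the HMS homeomorphism $X\cong\mathsf{Fil}(\mathsf{ClopFil}(X))$, $x\mapsto\{U\mid x\in U\}$, is a meet-semilattice isomorphism between L-spaces. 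Since isomorphisms of meet semilattices trivially satisfy the L-morphism conditions, it is an isomorphism in $\msf{LSpace}$.

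\textbf{Morphisms.} Let $h\colon K\to L$ be a lattice homomorphism and set $f=h^{-1}\colon\mathsf{Fil}(L)\to\mathsf{Fil}(K)$. We check the two L-morphism conditions.

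For condition (i), suppose $h^{-1}(F)=K$. Then $\bot\in h^{-1}(F)$, so $\bot=h(\bot)\in F$, and hence $F=L$.

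For condition (ii), suppose $G'\cap H'\subseteq h^{-1}(F)$. Let $G$ and $H$ be the filters of $L$ generated by $h[G']$ and $h[H']$ respectively. Then $G'\subseteq h^{-1}(G)$ and $H'\subseteq h^{-1}(H)$. It remains to show $G\cap H\subseteq F$. Take $c\in G\cap H$, so $c\ge h(g)$ for some $g\in G'$ and $c\ge h(k)$ for some $k\in H'$. Then $g\lor k\in G'\cap H'$, hence $h(g\lor k)\in F$. Since $c\ge h(g)\lor h(k)=h(g\lor k)$, we get $c\in F$.

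Conversely, let $f\colon X\to Y$ be an L-space morphism. We must show that $f^{-1}$ preserves $\lor$ and $\bot$ on clopen filters. This is the fact already noted after the definition of L-morphism, namely that $f^{-1}\colon\msf{Fil_F}(Y)\to\msf{Fil_F}(X)$ is a lattice homomorphism. To recall it: for $\bot$, condition (i) gives $f^{-1}(\{1\})=\{1\}$. For joins, if $x\in f^{-1}(U\lor V)$, then $y'\curlywedge z'\preceq f(x)$ for some $y'\in U$ and $z'\in V$. Condition (ii) yields $y,z$ with $f(y)\in U$, $f(z)\in V$ and $y\curlywedge z\preceq x$, so $x\in f^{-1}(U)\lor f^{-1}(V)$. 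The inclusion $f^{-1}(U)\lor f^{-1}(V)\subseteq f^{-1}(U\lor V)$ is immediate from monotonicity.

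\textbf{Fullness and conclusion.} We must also confirm that the restricted functors are full on the relevant morphisms, i.e.\ that every lattice homomorphism and every L-space morphism arises from the duality. Faithfulness and naturality are inherited from HMS duality, and the computations above show that both functors preserve the additional structure in each direction. Hence the restriction of HMS duality is a dual equivalence between $\msf{LSpace}$ and $\msf{Lat}$. The main obstacle is condition (ii) for $h^{-1}$, where the right choice of generated filters is needed; everything else is routine.
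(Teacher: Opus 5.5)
Your proposal is correct and takes the same route as the paper, which cites \cite[Theorem 2.14]{bez2024} and describes the result as restricting HMS duality to lattices and join-preserving morphisms; your checks ($\varphi(a)\lor\varphi(b)=\varphi(a\lor b)$, $\{1\}=\varphi(\bot)$, and the generated filters ${\uparrow}h[G']$, ${\uparrow}h[H']$ for condition (ii)) supply the details behind that citation. The one loose end is only presentational: fullness needs no separate argument, because it follows formally from the unit and counit components being isomorphisms in $\msf{Lat}$ and $\msf{LSpace}$ (which you verified) together with the naturality inherited from HMS duality.
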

\begin{proof}
    This is proved in \cite[Theorem 2.14]{bez2024}. It is obtained by restricting HMS duality to lattices and join preserving morphisms.
\end{proof}

\begin{rmk}
    Let us summarize the different categories and functors at play here (see also Figure \ref{fig:summary}. The categories are $\msf{Lat}$, the category of lattices; $\msf{LSpace}$, the category of L-spaces and continuous L-morphisms; and $\msf{LFrm}$, the category of L-frames and L-morphisms. The functors $\msf{Fil_L}\colon\msf{Lat}\to\msf{LSpace}$ and $\msf{ClopFil}\colon\msf{LSpace}\to\msf{Lat}$ form a duality. The functor $\msf{U}\colon\msf{LSpace}\to\msf{LFrm}$ is a forgetful functor, it forgets the topology, and is faithful. The functor $\msf{Fil_F}\colon\msf{LFrm}\to\msf{Lat}$ is dually adjoint to $\msf{U}\circ\msf{Fil_L}$.
    
    \begin{figure}[h!]
    \centering
    \begin{tikzcd}[row sep=large, column sep=large]
        & \msf{LFrm} \arrow[ld, "\msf{Fil_F}"'] \\
        \msf{Lat} \arrow[r, "\msf{Fil_L}"', bend right=10] & \msf{LSpace} \arrow[u, "\msf{U}"'] \arrow[l, "\msf{ClopFil}"', bend right=10]
    \end{tikzcd}
    \caption{Summary of dualities, dual adjunctions and inclusions}
    \label{fig:summary}
\end{figure}
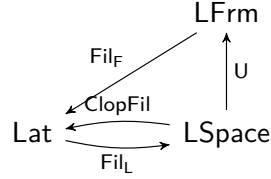
\end{rmk}

Let us recall a few properties of HMS-spaces that will come in handy later.

\begin{definition}
    An ordered topological space $(X, \preceq, \tau)$ is called a \emph{Priestley space} if $(X,\tau)$ is compact and whenever $x\not\preceq y$ then there is a clopen upset $U$ such that $x\in U$ and $y\notin U$.
\end{definition}

\begin{prop}\label{prop:priestley}
    Let $(X, \tau)$ be an HMS-space.
    \begin{enumerate}
        \item\label{item:prop:priestley:i} $X$ is a Priestley space.
        \item\label{item:prop:priestley:ii} For any closed subset $U\subseteq X$, the set ${\uparrow}U$ is closed.
        \item\label{item:prop:priestley:iii} Whenever $U$ is a closed subset of $X$ and $x\in U$, then there is a maximal point $x\preceq x$ such that $y\in U$, and a minimal point $z\preceq y$ such that $z\in U$.
        \item\label{item:prop:priestley:iv} $X$ has arbitrary meets and joins.
    \end{enumerate}
\end{prop}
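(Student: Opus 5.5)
We prove the four items in order, each building on the previous one. The approach is the standard Priestley-space argument, plus a compactness/Zorn argument for extremal points.

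For \ref{item:prop:priestley:i}, every filter of $X$ is an upset for $\preceq$. Hence the separating clopen filter $U$ in the definition of an HMS-space is in particular a clopen upset with $x\in U$ and $y\notin U$. Since $X$ is compact by assumption, $X$ is a Priestley space. We also record that Priestley spaces are Hausdorff: distinct points are separated by a clopen upset or its complement. So singletons are closed, and for each point $x$ the set ${\uparrow}x=\bigcap\{V \mid V \text{ clopen upset},\, x\in V\}$ is closed. Dually, ${\downarrow}x$ is closed, being the intersection of the complements of the clopen upsets not containing $x$.

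For \ref{item:prop:priestley:ii}, let $U$ be closed and $y\notin{\uparrow}U$. For every $x\in U$ we have $x\not\preceq y$, so by \ref{item:prop:priestley:i} there is a clopen upset $V_x$ with $x\in V_x$ and $y\notin V_x$. Since $U$ is closed in a compact space it is compact, so finitely many $V_{x_1},\dots,V_{x_n}$ cover $U$. Their union $V$ is a clopen upset containing $U$, hence containing ${\uparrow}U$, and $y\notin V$. Thus the complement of ${\uparrow}U$ is open. The same argument with complements of clopen upsets shows that ${\downarrow}U$ is closed.

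For \ref{item:prop:priestley:iii}, we read the statement as follows: for closed $U$ and $x\in U$ there are a maximal element $y\in U$ with $x\preceq y$ and a minimal element $z\in U$ with $z\preceq x$. This step, which combines Zorn's lemma with compactness, is the main point of the proposition. Apply Zorn's lemma to the poset $U\cap{\uparrow}x$. Given a chain $C$ in it, the sets ${\uparrow}c\cap U$ for $c\in C$ are closed by the remark in \ref{item:prop:priestley:i}. They have the finite intersection property, since a finite subfamily contains a largest $c$, which lies in all of them. By compactness their intersection is nonempty, and any point in it is an upper bound of $C$ in $U\cap{\uparrow}x$. Zorn's lemma then gives a maximal $y$. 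The minimal $z$ is obtained symmetrically using the closed sets ${\downarrow}c\cap U$.

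For \ref{item:prop:priestley:iv}, let $S\subseteq X$ be arbitrary and put $B=\bigcap_{s\in S}{\uparrow}s$, the set of upper bounds of $S$. It is closed by the remark in \ref{item:prop:priestley:i}. It is nonempty: since the meet of the empty family is $1$, the element $1$ is the top of $X$, and $1\in B$. It is closed under $\curlywedge$, because a meet of upper bounds is again an upper bound. By \ref{item:prop:priestley:iii}, $B$ has a minimal element $m$. For any $u\in B$ we have $m\curlywedge u\in B$ and $m\curlywedge u\preceq m$, so minimality forces $m\curlywedge u=m$, that is, $m\preceq u$. Hence $m$ is the least upper bound of $S$. (For $S=\emptyset$ we have $B=X$, and this produces a bottom element.) A poset in which every subset has a join is a complete lattice, so arbitrary meets exist as well, namely as the joins of the sets of lower bounds.
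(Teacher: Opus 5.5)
Your proof is correct and follows the paper's route. Item (i) comes straight from the definitions. Items (ii) and (iii) are the standard Priestley-space facts, which the paper only cites and which you prove by compactness and Zorn's lemma. Item (iv) takes the join of $S$ to be the generator of $\bigcap_{s\in S}{\uparrow}s$. The one difference is that the paper cites \cite[Lemma 2.8]{bez2024} for this intersection being principal, whereas you derive it from (iii): the intersection is a nonempty closed set closed under $\curlywedge$, so a minimal element of it is its minimum. A minor point: in the Zorn step the empty chain needs separate (trivial) treatment, since it is bounded by $x$ itself.
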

\begin{proof}
    \ref{item:prop:priestley:i} follows directly by the definitions. \ref{item:prop:priestley:ii} and \ref{item:prop:priestley:iii} are well known properties of Priestley spaces (see \cite[Lemma 11.21]{dav2002} and \cite[Theorem 3.2.3]{esa2019}, and hence follow from \ref{item:prop:priestley:i}.

    For \ref{item:prop:priestley:iv}, it follows by \cite[Lemma 2.8]{bez2024} that the intersection of principal filters is principal. Given a collection $C\subseteq X$, the generator of the principal filter
    \[ \bigcap_{x\in C} {\uparrow}x \]
    is the join of $C$, denoted $\bigcurlyvee C$.
\end{proof}

We recall some more facts about this duality:

\begin{prop}\label{prop:dual_inj_surj}
\quad
    \begin{enumerate}
        \item If $f\colon X\to Y$ is a surjective L-frame morphism, then $f^{-1}\colon \mathsf{ClopFil}(Y)\to \mathsf{ClopFil}(X)$ is an injective lattice homomorphism.
        \item If $h\colon K\to L$ is an injective lattice homomorphism, then $h^{-1}\colon\mathsf{Fil}(L)\to \mathsf{Fil}(K)$ is surjective.
    \end{enumerate}
\end{prop}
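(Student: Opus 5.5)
Both parts are proved directly from the definitions of the functors in Theorem \ref{thm:HMS_duality}. Part (i) is a routine preimage computation. Part (ii) uses the prime-filter-free form of the Stone separation argument that filters, rather than prime filters, allow.

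For (i), let $f\colon X\to Y$ be a surjective L-frame morphism; continuity is automatic for L-space morphisms. That $f^{-1}$ is a lattice homomorphism on clopen filters is already part of the duality. So it suffices to show injectivity. Suppose $U,V\in\mathsf{ClopFil}(Y)$ with $f^{-1}(U)=f^{-1}(V)$. For any $y\in U$, surjectivity gives some $x$ with $f(x)=y$. Then $x\in f^{-1}(U)=f^{-1}(V)$, so $y\in V$. Hence $U\subseteq V$, and by symmetry $U=V$. In fact this argument gives $U\subseteq V$ iff $f^{-1}(U)\subseteq f^{-1}(V)$, so $f^{-1}$ is an order embedding.

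For (ii), let $h\colon K\to L$ be an injective lattice homomorphism and $F\in\mathsf{Fil}(K)$. The plan is to take the filter $G$ of $L$ generated by $h[F]$, that is, $G={\uparrow}h[F]$. This is a filter because $h$ preserves meets and $F$ is closed under meets.

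It remains to check $h^{-1}(G)=F$. The inclusion $F\subseteq h^{-1}(G)$ is immediate. Conversely, suppose $h(k)\in G$. Then $h(a)\le h(k)$ for some $a\in F$. Since $h(a\land k)=h(a)\land h(k)=h(a)$, injectivity gives $a\land k=a$, so $a\le k$ and $k\in F$.

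There is no real obstacle here. The only point to notice is that injective lattice homomorphisms reflect order, which is exactly what makes the generated filter work, with no appeal to a prime filter theorem. Joins in $L$ play no role in (ii), so the statement holds equally for meet semilattices.
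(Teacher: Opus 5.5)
Your proposal is correct and follows the paper's proof essentially verbatim: (i) is the immediate surjectivity argument, and (ii) uses the same filter ${\uparrow}h[F]$ and the same order-reflection step $h(a\land k)=h(a)\Rightarrow a\land k=a$ via injectivity. One minor wording point: continuity is not automatic for an L-frame morphism, so for $f^{-1}$ to land in $\mathsf{ClopFil}(X)$ you must read $f$ as an L-space morphism (continuous by definition), which is also what the paper implicitly does.
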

\begin{proof}
    The first claim is trivial. For the second, assume that $h$ is injective, and let $F\in\msf{Fil_L}(K)$. Observe that $h[F]$ is closed under meets, and that ${\uparrow}h[F]$ is a filter. We show that $h^{-1}[{\uparrow}h[F]] = F$.
    
    Clearly $F\subseteq h^{-1}[{\uparrow}h[F]]$. We show the converse. Let $a\in h^{-1}[{\uparrow}h[F]]$. Then there is $b\in F$ such that $h(b)\le h(a)$. That is, we have $h(a\land b) = h(a)\land h(b) = h(b)$, thus $a\land b = b$ by injectivity of $h$. This immediately implies that $b\le a$, thus $a\in F$. 
\end{proof}

Using Proposition \ref{prop:dual_inj_surj}, we can translate amalgamation into a co-amalgamation property for dual spaces. To show it, we will make use of the following key separation result.

\begin{lemma}\label{lemma:sep}
    Let $(X, \tau)$ be an L-space. Let $U,V\subseteq X$ be two closed subsets, such that $U$ and $X\setminus V$ are filters, and $U\cap V=\emptyset$. Then there exists a clopen filter $W$ such that $U\subseteq W$ and $V\subseteq X\setminus W$.
\end{lemma}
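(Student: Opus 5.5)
The plan is to reduce to the separation axiom of HMS-spaces by showing that the closed filter $U$ is principal, and then to use compactness of $V$.

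\emph{Step 1: $U$ is principal.} Note first that $U\neq\emptyset$, since a filter contains $1$. By Proposition~\ref{prop:priestley}\ref{item:prop:priestley:iii}, applied to the closed set $U$, every point of $U$ lies above some minimal point of $U$. Let $u,u'$ be two minimal points of $U$. Since $U$ is closed under $\curlywedge$, we have $u\curlywedge u'\in U$. Also $u\curlywedge u'\preceq u$ and $u\curlywedge u'\preceq u'$, so minimality forces $u=u\curlywedge u'=u'$. Hence $U$ has a unique minimal element $u$, and every element of $U$ lies above it, so $U\subseteq{\uparrow}u$. Conversely ${\uparrow}u\subseteq U$ because $U$ is an upset. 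Thus $U={\uparrow}u$.

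\emph{Step 2: separate $u$ from each point of $V$.} Let $v\in V$. If $u\preceq v$, then $v\in{\uparrow}u=U$, which contradicts $U\cap V=\emptyset$. So $u\not\preceq v$. By the defining separation property of HMS-spaces, there is a clopen filter $W_v$ with $u\in W_v$ and $v\notin W_v$.

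\emph{Step 3: compactness.} The sets $X\setminus W_v$, for $v\in V$, are open and cover $V$. Since $V$ is closed in the compact space $X$, it is compact, so finitely many of them cover it: $V\subseteq\bigcup_{i=1}^n (X\setminus W_{v_i})$. If $V=\emptyset$, take $W=X$ instead. Otherwise put
\[ W\coloneq W_{v_1}\cap\dots\cap W_{v_n}. \]
A finite intersection of clopen filters is a clopen filter. It contains $u$, and being an upset it therefore contains ${\uparrow}u=U$. Finally, every $v\in V$ lies outside some $W_{v_i}$, so $V\subseteq X\setminus W$.

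The only genuinely nontrivial step is Step~1, the principality of closed filters. It rests on the existence of minimal points in closed subsets of Priestley spaces, recorded in Proposition~\ref{prop:priestley}. Note that this argument never uses the hypothesis that $X\setminus V$ is a filter; only the disjointness $U\cap V=\emptyset$ and closedness of $V$ enter. I expect that hypothesis to matter only in the later applications of the lemma.
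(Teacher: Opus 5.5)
Your proof is correct, but it takes a different route from the paper's.

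The paper uses a double compactness argument. For each $x\in U$ it separates $x$ from all of $V$ by a finite intersection $W_x$ of clopen filters. It then covers $U$ by finitely many $W_{x_1},\dots,W_{x_n}$ and takes their join $W_{x_1}\lor\dots\lor W_{x_n}$. This join is clopen because $X$ is an L-space, and it stays disjoint from $V$ because $X\setminus V$ is a filter.

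You instead show that a closed filter in an HMS-space has a least element, using minimal points of closed sets and closure under $\curlywedge$. So $U={\uparrow}u$, only one point needs to be separated from $V$, and both the second compactness step and the join disappear. This gives a stronger statement. The lemma holds in any HMS-space, with no need for the L-space axiom, and the hypothesis that $X\setminus V$ is a filter is superfluous. With your version, the part of Claim~\ref{claim:supamal} verifying that the complement of ${\downarrow}f_2[Y_2\setminus V]$ is a filter would not be needed in the proof of Theorem~\ref{thm:supamal}.

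Conversely, the paper's argument never uses that $U$ is closed under meets. It only needs $U$ closed and disjoint from the downset $V$, so it separates an arbitrary closed set from a closed set with filter complement. Your closing remark is therefore slightly off: the hypothesis on $X\setminus V$ is exactly what the paper's own proof of the lemma consumes, not something reserved for later applications.

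In the application both sets of hypotheses hold, so either proof serves.
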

\begin{proof}
    Fix $x\in U$. For every $y\in V$, by assumption, $x\not\preceq y$, so there is some clopen filter $W_{x,y}$ such that $x\in W_{x,y}$ and $y\not\in W_{x, y}$. Hence we have $V\subseteq\bigcup_{y\in V}X\setminus W_{x,y}$, so by compactness, there are $y_1, \dots, y_n,\in V$ such that $V\subseteq (X\setminus W_{x,y_1})\cup\dots\cup(X\setminus W_{x,y_n})$. Letting $W_{x}\coloneq W_{x,y_1}\cap\dots\cap W_{x,y_{n}}$, we have $V\subseteq X\setminus W_{x}$ and $x\in W_x$, and $W_x$ is a clopen filter.
    
    Since $x\in W_{x}$, and this holds for each $x\in U$, we obtain $U\subseteq \bigcup_{x\in U}W_{x}$, so by compactness again, there are $x_1, \dots, x_n\in U$ such that $U\subseteq W_{x_1}\cup\dots\cup W_{x_n}$. Letting $W \coloneq W_{x_1}\lor\dots\lor W_{x_n}$, we have $U\subseteq W$, $W\subseteq X\setminus V$ (because $W_x\subseteq X\setminus V$ and $X\setminus V$ is a filter), and $W$ is a clopen filter, as desired.
\end{proof}

\subsection{Craig interpolation for weak positive logic}

We will now prove superamalgamation of lattices via duality. 

\begin{thm}\label{thm:supamal}
    The variety of lattices has the superamalgamation property.
\end{thm}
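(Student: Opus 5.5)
Given a V-formation $h_i\colon K\to L_i$, I will dualize and build the amalgam from a pullback of L-spaces. Write $X_i=\msf{Fil_L}(L_i)$, $Y=\msf{Fil_L}(K)$ and $f_i=h_i^{-1}\colon X_i\to Y$. By Proposition \ref{prop:dual_inj_surj}, each $f_i$ is surjective. Form the pullback
\[ Z=\{(x_1,x_2)\in X_1\times X_2 \mid f_1(x_1)=f_2(x_2)\}, \]
with the componentwise meet and the subspace topology of the product. $Z$ is closed, because $Y$ is Hausdorff by Proposition \ref{prop:priestley}, so $Z$ is compact. It is a sub-meet-semilattice containing $(1,1)$. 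The order is separated by clopen filters of the form $\pi_i^{-1}(U)\cap Z$, so $Z$ is an HMS-space.

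The candidate amalgam is $M=\msf{Fil_F}(Z)$, the complete lattice of all filters of the L-frame $Z$, with $p_i(a)=\pi_i^{-1}(\varphi(a))$. Working with all filters avoids having to check that $Z$ is an L-space. The price is that I must show directly that each $\pi_i\colon Z\to X_i$ is a surjective L-morphism.

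\begin{itemize}
\item \emph{Surjectivity.} Given $x_1$, surjectivity of $f_2$ gives some $x_2$ with $f_2(x_2)=f_1(x_1)$.
\item \emph{First L-morphism condition.} This follows from $f_i$ reflecting $1$ and surjectivity.
\item \emph{Second L-morphism condition.} Suppose $y'\curlywedge z'\preceq x_1$ with $(x_1,x_2)\in Z$. I need $(y,u),(z,v)\in Z$ with $y'\preceq y$, $z'\preceq z$ and $(y\curlywedge z, u\curlywedge v)\preceq(x_1,x_2)$. Take $y=y'$ and $z=z'$. Since $f_1$ is an L-morphism, there are $u_0,v_0\in Y$ with $f_1(y')\preceq u_0$, $f_1(z')\preceq v_0$ and $u_0\curlywedge v_0\preceq f_2(x_2)$. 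The plan is to lift $u_0,v_0$ along $f_2$, using that $f_2$ is an L-morphism and surjective, and to adjust the $X_1$-coordinates upward by joins (Proposition \ref{prop:priestley}\ref{item:prop:priestley:iv}) so that both coordinates match.
\end{itemize}

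Once this holds, $\pi_i^{-1}$ restricted to clopen filters is an injective lattice homomorphism (Proposition \ref{prop:dual_inj_surj}), and composing with the duality gives $p_i$. The square commutes because $f_1\pi_1=f_2\pi_2$.

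For the superamalgamation clause, suppose $p_1(a)\subseteq p_2(b)$. Set
\[ C=f_1[\varphi(a)],\qquad D=f_2[X_2\setminus\varphi(b)]. \]
Both are closed, being continuous images of compacta. $C$ is a filter: $f_1$ preserves meets, and it is upward closed by the L-morphism ``back'' condition (with $y'=z'$). Similarly $Y\setminus D=\{y\mid f_2^{-1}(y)\subseteq\varphi(b)\}$ is a filter, since $\varphi(b)$ is a filter and $f_2$ is an L-morphism. If $y\in C\cap D$, say $y=f_1(x_1)=f_2(x_2)$ with $a\in x_1$ and $b\notin x_2$, then $(x_1,x_2)\in p_1(a)\setminus p_2(b)$, a contradiction. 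So Lemma \ref{lemma:sep} gives a clopen filter $W=\varphi(c)$ with $C\subseteq W$ and $D\cap W=\emptyset$. Then $\varphi(a)\subseteq f_1^{-1}(\varphi(c))=\varphi(h_1(c))$, so $a\le h_1(c)$. Likewise $\varphi(h_2(c))\subseteq\varphi(b)$, so $h_2(c)\le b$.

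I expect two steps to be the main obstacles:
\begin{itemize}
\item Verifying that $\pi_i$ satisfies the second L-morphism condition, i.e.\ that pullbacks of surjective L-morphisms behave well.
\item Showing that $C$ is upward closed and that $Y\setminus D$ is closed under meets.
\end{itemize}
If the former fails, I would instead take $M$ to be the lattice generated inside $\msf{Fil_F}(X_1\times X_2)$-style up-set algebra by the images of $L_1$ and $L_2$. Superamalgamation only needs the separation argument above, which uses just the commuting square and Lemma \ref{lemma:sep}.
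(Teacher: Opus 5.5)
Your framework is the paper's: the pullback $Z$ of $f_1,f_2$, the amalgam $\msf{Fil_F}(Z)$ with $p_i=\pi_i^{-1}\circ\varphi$, and the interpolant from Lemma~\ref{lemma:sep}. But there are two genuine gaps.

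\textbf{The L-morphism property of $\pi_1$.} You leave this step as a plan, and it is the heart of the amalgamation; the plan also points the wrong way. In the second condition, your $u_0,v_0$ come for free: take $u_0=f_1(y')$, $v_0=f_1(z')$ by monotonicity, with no L-morphism property of $f_1$ involved. The work is on the $X_2$ side.
\begin{enumerate}
\item Apply the L-morphism condition for $f_2$ to $f_1(y')\curlywedge f_1(z')\preceq f_2(x_2)$. This gives $u',v'\in X_2$ with $f_1(y')\preceq f_2(u')$, $f_1(z')\preceq f_2(v')$ and $u'\curlywedge v'\preceq x_2$.
\item Surjectivity of $f_2$ gives $u'',v''$ with $f_2(u'')=f_1(y')$ and $f_2(v'')=f_1(z')$.
\item The key move is to \emph{lower} the second coordinates: put $u=u'\curlywedge u''$ and $v=v'\curlywedge v''$. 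Then $f_2(u)=f_1(y')$, $f_2(v)=f_1(z')$ and $u\curlywedge v\preceq x_2$, so $(y',u)$ and $(z',v)$ are the required witnesses.
\end{enumerate}
Raising the $X_1$-coordinates by joins cannot replace this. Nothing controls the meet of the raised points, so $y\curlywedge z\preceq x_1$ is lost; it also contradicts your choice $y=y'$, $z=z'$. Until this step is done you do not know that the $p_i$ are lattice homomorphisms, so you have no amalgam yet. (Minor: the first L-morphism condition only needs that $f_2$ reflects $1$, not surjectivity.)

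\textbf{The set $C$ in the superamalgamation step.} $C=f_1[\varphi(a)]$ is in general \emph{not} upward closed. The back condition with $y'=z'$ is vacuous, since you can take both witnesses equal to $x$. For a counterexample, let $K=\{0,p,1\}$ be the three-element chain inside $L_1=M_3=\{0,p,q,r,1\}$ and take $a=q$. Then $\varphi(q)=\{{\uparrow}q,L_1\}$ and $C=\{\{1\},K\}$, which omits the filter $\{p,1\}$ of $K$ lying between them. So Lemma~\ref{lemma:sep} does not apply to $C$ as stated.

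The repair is as follows.
\begin{enumerate}
\item Use ${\uparrow}C$ instead of $C$. It is closed by Proposition~\ref{prop:priestley}, and it is a filter because $C$ is meet-closed.
\item Show ${\uparrow}C\cap D=\emptyset$ by showing $D$ is a down-set. This follows from the same meet trick: if $y\preceq f_2(x_2)$ with $b\notin x_2$, pick $t$ with $f_2(t)=y$. Then $f_2(x_2\curlywedge t)=y$ and $b\notin x_2\curlywedge t$.
\end{enumerate}
Your meet-closure argument for $Y\setminus D$ silently needs the same trick. The L-morphism condition only yields $x_2'$ with $y\preceq f_2(x_2')$, not equality.

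The paper does precisely this by separating ${\uparrow}f_1[U]$ from ${\downarrow}f_2[Y_2\setminus V]$. With these two repairs your proof becomes the paper's.
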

\begin{proof}
    Let $K,L_{1},L_{2},h_{1},h_{2}$ be a V-formation. Dually, we consider the diagram in Figure \ref{subfig:coamal_lat}. Consider $\mrm{Pb}(f_1, f_2) \coloneq \{(x, y)\mid f_1(x)=f_2(y)\}$ together with the surjective projections $\pi_i\colon \mrm{Pb}(f_1, f_2)\to L_i$.

    \begin{figure}[h!]
        \centering
        \begin{subfigure}{.49\textwidth}
            \centering
            \begin{tikzcd}
                & \msf{Fil_F}(\mrm{Pb}(f_1, f_2)) & \\
                L_1 \arrow[ru, "\pi_1^{-1}", hook] & & L_2 \arrow[lu, "\pi_2^{-1}"', hook] \\
                & K \arrow[lu, "h_1", hook] \arrow[ru, "h_2"', hook] &
            \end{tikzcd}
            \caption{Amalgam of lattices}
            \label{subfig:amal_lat}
        \end{subfigure}
        \hfill
        \begin{subfigure}{.49\textwidth}
        \centering
            \begin{tikzcd}
                & \mrm{Pb}(f_1, f_2) \arrow[dl, "\pi_1"', two heads] \arrow[dr, "\pi_2", two heads] & \\
                Y_1 \arrow[rd, "f_1"', two heads] & & Y_2 \arrow[ld, "f_2", two heads] \\
                & X &
            \end{tikzcd}
            \caption{Coamalgam of L-frames}
            \label{subfig:coamal_lat}
        \end{subfigure}
        \caption{Amalgamation diagram and its dual}
    \end{figure}
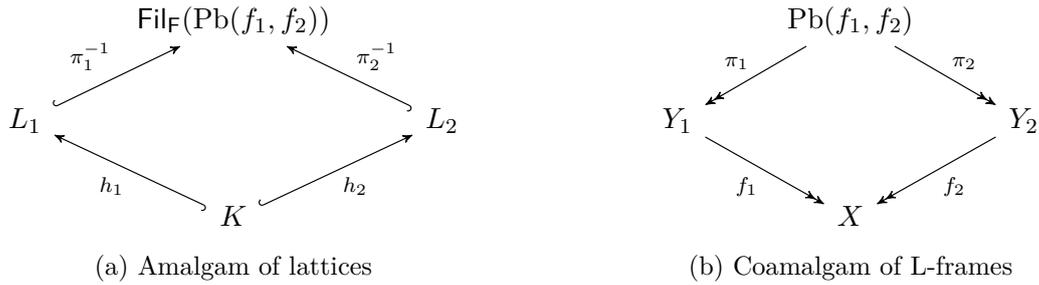
    
    Note that $\mrm{Pb}(f_{1},f_{2})$ is not guaranteed to be an L-space. However, it will always be an HMS-space, since products and closed substructures of HMS-spaces are still HMS-spaces. We show that $\pi_{i}$ are $L$-morphisms

    First assume that $\pi_1(x_1, x_2) = 1$. Then $x_1 = 1$, thus $f_2(x_2) = f_1(x_1) = 1$, and thus $x_2=1$ since $f_2$ is an L-morphism. Now assume that $y_1\curlywedge z_1\preceq\pi_1(x_1, x_2)$. Then $y_1\curlywedge z_1\preceq x_1$, thus $f_1(x_1)\curlywedge f_1(x_1)\preceq f_1(x_1) = f_2(x_2)$. Since $f_2$ is an L-morphism, we find $y_2', z_2'\in Y_2$ such that $f_1(y_1)\preceq f_2(y_2')$, $f_1(z_1)\preceq f_2(z_2')$ and $y_2'\curlywedge z_2'\preceq x_2$. Since $f_2$ is onto, we also find $y_2'', z_2''\in Y_2$ such that $f_2(y_2'') = f_1(y_1)$ and $f_2(z_2'') = f_1(z_1)$. Observe that $f_2(y_2'\curlywedge y_2'') = f_1(y_1)$, and $f_2(z_2'\curlywedge z_2'') = f_1(z_1)$, thus $(y_1, y_2'\curlywedge y_2''), (z_1, z_2'\curlywedge z_2'')\in\mrm{Pb}(f_1, f_2)$. Furthermore, we have $y_1\preceq \pi_1(y_1, y_2'\curlywedge y_2'')$, $z_1\preceq\pi_1(z_1, z_2'\curlywedge z_2'')$ and $(y_1, y_2'\curlywedge y_2'')\curlywedge(z_1, z_2'\curlywedge z_2'')\preceq(x_1, x_2)$. This proves that $\pi_1$ is an L-morphism, the proof for $\pi_2$ is similar.
    
    Hence we can consider the lattice $\msf{Fil_F}(\mrm{Pb}(f_{1},f_{2}))$, together with the injective homomorphisms $\pi_{i}^{-1}\colon L_{i}\to \msf{Fil_F}(\mrm{Pb}(f_{1},f_{2}))$, which will be $L$-morphisms.
    
    It is clear that the diagram commutes, so this provides an amalgam. We check that it is a superamalgam as well.
    
    We first show the following two claims:

    \begin{claim}\label{claim:supamal}
        Assume that $U\subseteq Y_{1}$ and $V\subseteq Y_{2}$ are two clopen filters.
        \begin{enumerate}
            \item ${\downarrow}f_{2}[Y_{2}\setminus V]$ is a closed subset which complement is a filter, and ${\uparrow}f_{1}[U]$ is a closed filter.
            \item If $\pi_{1}^{-1}[U]\subseteq \pi_{2}^{-1}[V]$, then ${\uparrow}f_{1}[U]\cap {\downarrow}f_{2}[Y_{2}\setminus V]=\emptyset$.
        \end{enumerate}
    \end{claim}
\begin{proof}
    The fact that ${\uparrow}f_{1}[U]$ is a filter is easy to see. To show that the complement of ${\downarrow}f_{2}[Y_{2}\setminus V]$ is a filter, take $x, y\notin {\downarrow}f_{2}[Y_{2}\setminus V]$. Suppose that $x\curlywedge y\preceq f_{2}(z)$ where $z\notin V$. Because $f_{2}$ is an L-morphism, there are $x', y'$ such that $x'\curlywedge y'\preceq z$, $x\preceq f_{2}(x')$ and $y\preceq  f_{2}(y')$. Because $x\notin {\downarrow}f_{2}[Y_2\setminus V]$, we have $x'\in V$, for otherwise, $x'\in Y_2\setminus V$ so $x\preceq f_{2}(x')$ gets us a contradiction. Similarly, $y'\in V$ and since $V$ is a filter, $x'\curlywedge y'\in V$. But $V$ is a filter, so $z\in V$, a contradiction.

    Both sets are closed. Indeed, $f_2[Y_2\setminus V]$ and $f_1[U]$ are compact, as they are images of a compact set via a continuous map, and therefore are closed. By Proposition~\ref{prop:priestley}, the sets ${\downarrow}f_{2}[Y_{2}\setminus V]$ and ${\uparrow}f_{1}[U]$ are closed.

    For the second statement, assume that $y\in {\uparrow}f_{1}[U]\cap {\downarrow}f_{2}[Y_{2}\setminus V]$. Then there are points $x\in U$ and $z\in Y_2\setminus V$ such that $f_{1}(x)\preceq y\preceq f_{2}(z)$. Now since $f_{2}$ is surjective, there is some $t$ such that $f_{1}(x)=f_{2}(t)$. Then $f_{2}(z)\curlywedge f_{2}(t)=f_{1}(x)$ and thus  $f_{2}(z\curlywedge t)=f_{1}(x)$. Hence $(x, z\curlywedge t)\in \mrm{Pb}(f_{1},f_{2})$, and this belongs to $\pi_{1}^{-1}[U]$. By assumption, this also belongs to $\pi_2^{-1}[V]$, so $z\curlywedge t\in V$, and by upwards closure, $z\in V$, a contradiction. This proves the result.
\end{proof}
    
    Now assume that $U\subseteq Y_{1}$ and $V\subseteq Y_{2}$ are clopen filters, such that $\pi_{1}^{-1}[U]\subseteq \pi_{2}^{-1}[V]$. By Claim \ref{claim:supamal}, we obtain that there is a clopen filter $W$ such that ${\uparrow}f_{1}[U]\subseteq W$ and ${\downarrow}f_{2}[Y_{2}\setminus V]\subseteq X\setminus W$. This  implies that $U\subseteq f_{1}^{-1}[W]$ and $f_2^{-1}[W]\subseteq V$, which proves superamalgamation.
\end{proof}

It is worth comparing this proof technique to the one exposed in \cite{jip1992}, which is originally due to Jónsson \cite{jon1956}. We recall the proof technique used there.

\begin{rmk}[Jónsson's proof of Theorem \ref{thm:supamal}]
    Let $K,L_{1},L_{2},h_{1},h_{2}$ be a V-formation. Without loss of generality, we may assume that $K$ is a sublattice of both $L_1$, that $L_2$, and $L_1\cap L_2 = K$ and that $h_1$, $h_2$ are inclusions, which we denote by $\iota_1$, $\iota_2$. We have to show that there is a lattice $M$ with embeddings $p_1\colon L_1\to M$ and $p_2\colon L_2\to M$ such that $M$ is a superamalgam.

    Let $\le_1$ be the order on $L_1$ and $\le_2$ be the order on $L_2$. We define an order $\le$ on $L_1\cup L_2$ as the transitive closure of ${\le_1}\cup{\le_2}$, turning $L_1\cup L_2$ into a poset. One easily shows that $a\le b$ iff $a, b\in L_i$ and $a\le_i b$, or $a\in L_i$, $b\in L_j$ and there is $c\in K$ such that $a\le_ic\le_jb$.

    A subset $F$ of $L_1\cup L_2$ is called an \emph{$(L_1, L_2)$-filter} if it is an upset and $a, b\in F\cap L_i$ implies $a\land_ib\in F$, where $\land_i$ is the meet in $L_i$. We denote by $\mrm{Fil}(L_1, L_2)$ the set of $(L_1, L_2)$-filters of $L_1\cup L_2$, ordered by reverse inclusion. This is a lattice, and we have embeddings $p_i\colon L_i\to \mrm{Fil}(L_1, L_2) \quad a\mapsto {\uparrow}a$. One easily shows that this is a superamalgam.

    \vspace{3mm}

    The set $\mrm{Fil}(L_1, L_2)$ thus defined is the order dual of the set $\mrm{Pb}(f_1, f_2)$ defined in the first proof. Indeed, when the maps $h_i\colon K\to L_i$ are inclusions, the maps $f_i\colon Y_i\to X$ are are restrictions sending $F\subseteq Y_i$ to $F\cap K$. Therefore, the elements of $\mrm{Pb}(f_1, f_2)$ are pairs of filters $(F_1, F_2)$ such that $F_1\cap K = F_2\cap K$. One easily verifies that if $(F_1, F_2)$ is such a pair, then $F_1\cup F_2$ is an $(L_1, L_2)$-filter; and that conversely, if $F$ is an $(L_1, L_2)$-filter, then $(F\cap L_1, F\cap L_2)$ is an element of $\mrm{Pb}(f_1, f_2)$; those two constructions being each other's inverse. As $\mrm{Fil}(L_1, L_2)$ is ordered by reverse inclusion, while $\mrm{Pb}(f_1, f_2)$ is ordered by pairwise inclusion, their orders are reversed.
\end{rmk}

\begin{rmk}
    While Jónsson's proof is striking by its simplicity, it should be noted that it does not straightforwardly generalize to the modal case. This can be explained by the fact that the amalgam $\mrm{Fil}(L_1, L_2)$ actually belongs to the dual side. Therefore, if we were to add modalities to the initial lattices, this would correspond to adding a relation on $\mrm{Fil}(L_1, L_2)$, but it is unclear how this could induce modalities on that same lattice.

    On the other hand, we will see in the next section that the method presented in this paper generalizes to the modal case, with the modalities on the initial lattices corresponding to relations on the dual L-frame $\mrm{Pb}(f_1, f_2)$, which in turn can be turned into modalities on $\msf{Fil_F}(\mrm{Pb}(f_1, f_2))$
\end{rmk}

\section{Superamalgamation of modal lattices}\label{section:supamal_modal}

\subsection{Duality for modal lattices}

In this section we look at superamalgamation of modal lattices, using the general strategy employed in the previous section. For that purpose, we extend the duality used in the previous section.  Let us first recall the modal version of L-frames, first introduced in \cite{bez2024}.

\begin{definition}\label{def:modal_L-frame}
    A \emph{modal L-frame} is a tuple $(X, 1, \curlywedge, R)$ where $(X, 1, \curlywedge)$ is an L-frame, and $R$ is a binary relation on $X$ such that:
    \begin{enumerate}
        \item\label{item:def:modal_L-frame:i} if $x\preceq y$ and $y\mrel{R}z$, there is a $w\in X$ such that $x\mrel{R}w$ and $w\preceq z$;
        \item\label{item:def:modal_L-frame:ii} if $x\preceq y$ and $x\mrel{R}w$, there is a $z\in X$ such that $y\mrel{R}z$ and $w\preceq z$;
        \item\label{item:def:modal_L-frame:iii} if $(x\curlywedge y)\mrel{R}z$, there are $u, v\in X$ such that $x\mrel{R}u$, $y\mrel{R}v$ and $u\curlywedge v\preceq z$;
        \item\label{item:def:modal_L-frame:iv} if $x\mrel{R}u$ and $y\mrel{R}v$, then $(x\curlywedge y)\mrel{R}(u\curlywedge v)$;
        \item\label{item:def:modal_L-frame:v} for all $x\in X$, $1\mrel{R}x$ iff $x=1$.
    \end{enumerate}

    \begin{figure}[h]
        \centering
        \begin{tikzcd}[row sep=large,
                       column sep=large,
                       every matrix/.append style={draw, inner ysep=2pt, 
                       inner xsep=3pt, rounded corners}]
            \text{
            \begin{tikzpicture}[scale=.9, baseline=0]
                \node (x) at (0,0) {$x$};
                \node (y) at (0,1.2) {$y$};
                \node (z) at (1.4,0) {$z$};
                \node (w) at (1.4,1.2) {$w$};
                \draw (x) to (y);
                \draw[densely dashed, -stealth'] (x) to node[below]{\scriptsize{$R$}} (z);
                \draw[-stealth'] (y) to node[above]{\scriptsize{$R$}} (w);
                \draw[densely dashed] (w) to (z);
                \node (label) at (.7,-1) {\ref{item:def:modal_L-frame:i}};
            \end{tikzpicture}
            \hspace{1.5em}
            \begin{tikzpicture}[scale=.9, baseline=0]
                \node (x) at (0,0) {$x$};
                \node (y) at (0,1.2) {$y$};
                \node (z) at (1.4,0) {$z$};
                \node (w) at (1.4,1.2) {$w$};
                \draw (x) to (y);
                \draw[-stealth'] (x) to node[below]{\scriptsize{$R$}} (z);
                \draw[densely dashed, -stealth'] (y) to node[above]{\scriptsize{$R$}} (w);
                \draw[densely dashed] (w) to (z);
                \node (label) at (.7,-1) {\ref{item:def:modal_L-frame:ii}};
            \end{tikzpicture}
            \hspace{1.5em}
            \begin{tikzpicture}[scale=.9, baseline=0]
                \node (x) at (-.9,1) {$x$};
                \node (y) at (.4,1.5) {$y$};
                \node (xandy) at (0,0) {$x \curlywedge y$};
                \node (z) at (2.2,0) {$z$};
                \node (v) at (1.4,1) {$v$};
                \node (w) at (2.8,1.5) {$w$};
                \node (vandw) at (2.3,-1) {$v \curlywedge w$};
                \node (vandwleft) at (2.1,-.9) {};
                \node (vandwright) at (2.5,-.9) {};
                \draw[-stealth'] (xandy) to node[below,pos=.4]{\scriptsize{$R$}} (z);
                \draw (x) to (xandy);
                \draw (y) to (xandy);
                \draw[white, line width=5pt, bend right=12] (v) to (vandwleft);
                \draw[densely dashed, bend right=12] (v) to (vandwleft);
                \draw[densely dashed, bend left=10] (w) to (vandwright);
                \draw[white, line width=5pt] (x) to (v);
                \draw[densely dashed, -stealth'] (x) to node[below,pos=.7]{\scriptsize{$R$}} (v);
                \draw[densely dashed, -stealth'] (y) to node[above]{\scriptsize{$R$}} (w);
                \draw[densely dashed] (vandw) to (z);
                \node (label) at (1,-1) {\ref{item:def:modal_L-frame:iii}};
            \end{tikzpicture}
            \hspace{1.5em}
            \begin{tikzpicture}[scale=.9, baseline=0]
                \node (x) at (-.9,1) {$x$};
                \node (y) at (.4,1.5) {$y$};
                \node (xandy) at (0,0) {$x \curlywedge y$};
                \node (v) at (1.4,1) {$v$};
                \node (w) at (2.7,1.5) {$w$};
                \node (vandw) at (2.3,0) {$v \curlywedge w$};
                \draw (x) to (xandy);
                \draw (y) to (xandy);
                \draw[white, line width=5pt] (x) to (v);
                \draw[-stealth'] (x) to node[below,pos=.75]{\scriptsize{$R$}}(v);
                \draw[-stealth'] (y) to node[above]{\scriptsize{$R$}} (w);
                \draw (v) to (vandw);
                \draw (w) to (vandw);
                \draw[densely dashed, -stealth'] (xandy) to node[below]{\scriptsize{$R$}} (vandw);
                \node (label) at (1.2,-1) {\ref{item:def:modal_L-frame:iv}};
            \end{tikzpicture}
            }
        \end{tikzcd}
        \caption{The first four conditions of a modal L-frame.
               Lines denote the poset order, with high nodes being bigger.
               Arrows denote the relation $R$.}
        \label{fig:modal_L-frame}
    \end{figure}
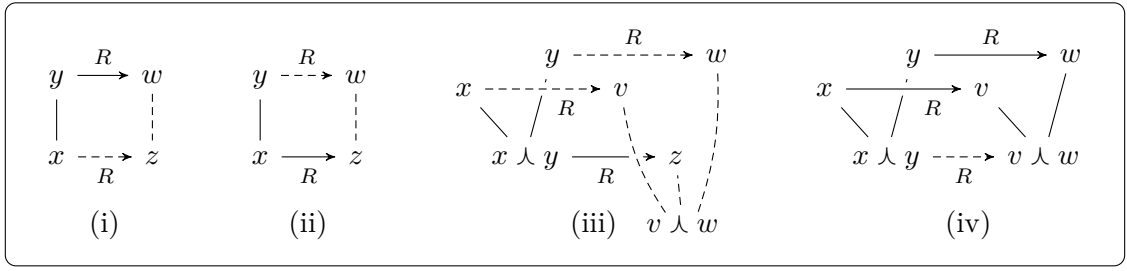
\end{definition}

Any modal L-frame $(X, R)$ gives rise to modal operators on the lattice $\msf{Fil_F}(X)$. Indeed, the five conditions ensure that $\Box_R\colon U\mapsto \{ x\in X\mid R[x]\subseteq U\}$ and $\Diamond_R\colon U\mapsto \{ x\in X\mid R[x]\cap U\neq\emptyset \}$ send filters to filters.

\begin{definition}
    A \emph{bounded L-morphism} between modal L-frames $(X, R)$ and $(Y, R)$ is an L-morphism $f\colon X\to Y$ such that:
    \begin{enumerate}
        \item if $x\mrel{R}y$, then $f(x)\mrel{R}f(y)$;
        \item if $f(x)\mrel{R}z$, then there is $y\in X$ such that $x\mrel{R}y$ and $f(y)\preceq z$;
        \item if $f(x)\mrel{R}z$, then there is $y\in X$ such that $x\mrel{R}y$ and $z\preceq f(y)$.
    \end{enumerate}
\end{definition}

One easily checks that if $f\colon(X, R)\to (Y, R)$ is a bounded L-morphism, then $f^{-1}\colon \msf{Fil_F}(Y)\to\msf{Fil_F}(X)$ preserves $\Box_R$ and $\Diamond_R$.

Now, to obtain a duality, a topology needs to be added to modal L-frames. This gives the following definition.

\begin{definition}
    A \emph{modal L-space} is a tuple $(X, 1, \curlywedge, \tau, R)$ such that:
    \begin{enumerate}
        \item $(X, 1, \curlywedge, \tau)$ is an L-space;
        \item $R$ is a binary relation on $X$ such that $1\mrel{R}x$ iff $x=1$;
        \item if $U$ is a clopen filter, then so are $\Box_R U$ and $\Diamond_R U$;
        \item for all $x, y\in X$, we have $x\mrel{R}y$ iff for all clopen filter $U$, $x\in \Box_RU$ implies $y\in U$ and $y\in U$ implies $x\in\Diamond_RU$.
    \end{enumerate}

    A \emph{modal L-space morphism} is a continuous bounded L-morphism.

    We write $\msf{MLSpace}$ for the category of modal L-spaces and modal L-space morphisms.
\end{definition}

If $(X, \tau, R)$ is a modal L-space, then $(X, R)$ is a modal L-frame (see \cite[Lemma 4.15]{bez2024}). We denote the underlying L-frame of a modal L-space by $\msf{U}(X)$. Let us recall some basic facts about modal L-spaces.

\begin{thm}
    The category $\msf{MLSpace}$ is dually equivalent to the category $\msf{MLat}$ of modal lattices.
\end{thm}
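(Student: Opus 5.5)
The plan is to obtain this duality by restricting the lattice duality between $\msf{Lat}$ and $\msf{LSpace}$, exactly as the classical Jónsson–Tarski duality restricts Stone duality. There are two functors to build and check. On the algebraic side, a modal lattice $A$ is sent to $\msf{Fil_L}(A)$ equipped with the relation
\[ F\mrel{R}G \iff \Box^{-1}[F]\subseteq G \text{ and } G\subseteq \Diamond^{-1}[F]. \]
Here $\Box^{-1}[F]=\{a\mid \Box a\in F\}$ and $\Diamond^{-1}[F]$ is defined similarly. On the spatial side, a modal L-space $(X,R)$ is sent to $\msf{ClopFil}(X)$ with the operators $\Box_R$ and $\Diamond_R$. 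On morphisms, both functors again act by preimage.

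\textbf{Step 1: the spatial side.} Condition (iii) in the definition of modal L-space makes $\Box_R$ and $\Diamond_R$ well-defined operators on clopen filters. I then verify the modal-lattice identities.
\begin{itemize}
\item $\Box_R X=X$ holds trivially.
\item $\Diamond_R X=X$ uses seriality, which follows from the frame conditions, since the L-space is a modal L-frame by \cite[Lemma 4.15]{bez2024}.
\item $\Box_R$ preserves binary meets by definition.
\item $\Diamond_R$ is monotone, which gives $\Diamond a\lor\Diamond b\le\Diamond(a\lor b)$.
\item $\Diamond_R U\cap\Box_R V\subseteq\Diamond_R(U\cap V)$ is immediate: any $R$-successor of $x$ lying in $U$ also lies in $V$.
\end{itemize}
For morphisms, the remark after the definition of bounded L-morphism gives that $f^{-1}$ commutes with $\Box_R$ and $\Diamond_R$. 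So $\msf{ClopFil}$ lifts to a functor $\msf{MLSpace}\to\msf{MLat}$.

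\textbf{Step 2: the algebraic side.} I must show that $(\msf{Fil_L}(A),R)$ is a modal L-space. The conditions split by difficulty.
\begin{itemize}
\item \emph{The top element.} For $1=A$, the defining condition forces $G\supseteq \Box^{-1}[A]=A$, so $1\mrel{R}G$ iff $G=A$.
\item \emph{Closure under the operators.} The crucial computation is that $\Box_R\varphi(a)=\varphi(\Box a)$ and $\Diamond_R\varphi(a)=\varphi(\Diamond a)$. This gives condition (iii), and it shows that the unit $a\mapsto\varphi(a)$ is a modal lattice isomorphism.
\item \emph{The $\Box$ inclusion.} The inclusion $\varphi(\Box a)\subseteq\Box_R\varphi(a)$ follows directly from the definition of $R$.
\item \emph{The $\Box$ converse.} Suppose $\Box a\notin F$. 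Take $G={\uparrow}\Box^{-1}[F]$. This is a filter because $\Box$ preserves meets, and it omits $a$. We still need $G\subseteq\Diamond^{-1}[F]$, and this is where the duality axiom $\Diamond a\land\Box b\le\Diamond(a\land b)$ together with $\Diamond\top=\top$ enters: every $b$ with $\Box b\in F$ satisfies $\Diamond b\ge \Diamond\top\land\Box b$, so $\Diamond b\in F$.
\item \emph{The $\Diamond$ case.} If $\Diamond a\in F$, take $G={\uparrow}(\{a\}\cup\Box^{-1}[F])$. For $b$ with $\Box b\in F$, the duality axiom gives $\Diamond(a\land b)\in F$. Monotonicity then shows that every element of $G$ lies in $\Diamond^{-1}[F]$.
\end{itemize}
In both cases, filters in this lattice setting never need to be prime. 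This is exactly why the construction works without the dual Dunn axiom and without $\Diamond$ preserving joins.

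\textbf{Step 3: the tightness condition (iv).} I need $F\mrel{R}G$ iff the clopen-filter conditions hold. By the computation in Step 2, the clopen filters are exactly the sets $\varphi(a)$. Then (iv) reduces to the definition of $R$: $\varphi(\Box a)\ni F$ implies $a\in G$, and $a\in G$ implies $\Diamond a\in F$.

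\textbf{Step 4: morphisms and naturality.} For a modal lattice homomorphism $h$, I show that $h^{-1}$ is a bounded L-morphism. Forth is a direct computation. The two back conditions are proved by building the required filters exactly as in Step 2, now inside the domain lattice and pulling back along $h$; the main work is checking the order conditions $f(y)\preceq z$ and $z\preceq f(y)$.

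For the other counit, take a modal L-space $X$. The lattice isomorphism $X\cong\msf{Fil_L}(\msf{ClopFil}(X))$ preserves and reflects $R$ precisely by condition (iv).

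\textbf{Main obstacle.} I expect the back conditions for $h^{-1}$ in Step 4 to be the hardest part. They require generating filters that are simultaneously compatible with $h$, with $\Box$, and with $\Diamond$. My first approach there is to take the filter generated by $h^{-1}$-images together with $\Box^{-1}[F]$, and to verify the required inclusions using the duality axiom.
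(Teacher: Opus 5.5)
Your proposal is correct and follows essentially the same route as the paper's sketch, which defers the details to \cite[Theorem 4.26]{bez2024}: restrict the lattice duality, equip $\msf{Fil_L}(A)$ with the same relation $R$, show $\Box_R\varphi(a)=\varphi(\Box a)$ and $\Diamond_R\varphi(a)=\varphi(\Diamond a)$, and check that preimage maps respect the modal structure. The step you flag as the main obstacle is in fact routine: if $h^{-1}[F]\mrel{R}Z$, suitable witnesses in the dual of the codomain of $h$ are $G=\Box^{-1}[F]$, for which $h^{-1}[G]=\Box^{-1}[h^{-1}[F]]\subseteq Z$, and the filter generated by $h[Z]\cup\Box^{-1}[F]$, which lies inside $\Diamond^{-1}[F]$ by the duality axiom because $\Diamond h(z)=h(\Diamond z)\in F$ for every $z\in Z$.
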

\begin{proof}[Proof]
    See \cite[Theorem 4.26]{bez2024} for a detailed proof. Here is a quick sketch.
    
    As noted in Theorem \ref{thm:HMS_duality}, the functors $\msf{Fil_L}$ and $\msf{ClopFil}$ send lattices to L-spaces and vice versa. Given a modal lattice $(L, \Box, \Diamond)$, we define a relation $R$ on $\msf{Fil_L}(L)$ by
    \[ x\mrel{R}y\qquad\text{iff}\qquad\text{for all}~a\in L,~\Box a\in x~\text{implies}~a\in y~\text{and}~a\in y~\text{implies}~\Diamond a\in x. \]
    This turns $\msf{Fil_L}(L)$ into a modal L-space.
    
    In turn, given a modal L-space $(X, \tau, R)$, the conditions on $R$ ensure that $\Box_R$ and $\Diamond_R$ are well defined modal operators. Moreover, one easily observes that if an L-space morphism $f\colon(X, \tau, R)\to (Y, \tau, R)$ is a bounded morphism, then its dual $f^{-1}\colon\msf{ClopFil}(Y)\to\msf{ClopFil}(X)$ preserves modalities. Similarly, if $h\colon(L, \Box, \Diamond)\to(L, \Box, \Diamond)$ is a modal lattice homomorphism, then $h^{-1}\colon\msf{Fil_L}(L)\to\msf{Fil_L}(K)$ is a bounded morphism.
\end{proof}

\begin{prop}\label{prop:succ_set}
    Let $(X, \tau, R)$ be an L-space. Then $R[x]$ is a closed nonempty convex set for all $x\in X$. Moreover, for any $x\in X$, the set $R[x]$ is closed under meets.
\end{prop}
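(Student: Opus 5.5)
The statement is about a modal L-space $(X,\tau,R)$, and all four properties of $R[x]$ will be read off from condition (iv) of the definition of a modal L-space. That condition gives the description
\[ R[x] \;=\; \bigcap_{U:\, x\in\Box_R U} U \;\cap\; \bigcap_{U:\, x\notin\Diamond_R U} (X\setminus U), \]
where $U$ ranges over clopen filters. Concretely, $y\in R[x]$ iff $y$ lies in every clopen filter $U$ with $x\in\Box_RU$ and in no clopen filter $U$ with $x\notin\Diamond_RU$; the second clause is the contrapositive of ``$y\in U$ implies $x\in\Diamond_R U$''. I plan to derive each property from this formula in turn.

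\emph{Closedness.} Each set in the intersection is clopen, so $R[x]$ is closed.

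\emph{Convexity.} Suppose $y\preceq z\preceq w$ with $y,w\in R[x]$. For a clopen filter $U$ with $x\in\Box_RU$, we have $y\in U$, and since $U$ is an upset, $z\in U$. For a clopen filter $U$ with $x\notin\Diamond_RU$, we have $w\notin U$; because $U$ is an upset and $z\preceq w$, it follows that $z\notin U$. Hence $z\in R[x]$.

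\emph{Closure under meets.} Take $y,z\in R[x]$. For the first family, if $x\in\Box_R U$ then $y,z\in U$, so $y\curlywedge z\in U$ because $U$ is a filter. For the second family, if $x\notin\Diamond_RU$, I need $y\curlywedge z\notin U$. This does not follow from $U$ being a filter, since filters are upward closed, so a smaller point could still lie in $U$. This is the step I expect to be the main obstacle. My first attempt is to use that $X$ is a modal L-frame by \cite[Lemma 4.15]{bez2024}: condition (iv) of Definition~\ref{def:modal_L-frame}, applied to $x\mrel{R}y$ and $x\mrel{R}z$, gives $(x\curlywedge x)\mrel{R}(y\curlywedge z)$, and $x\curlywedge x=x$. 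So $y\curlywedge z\in R[x]$ directly, and this avoids the membership question for the second family altogether.

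\emph{Nonemptiness.} By compactness it suffices to show that the defining family of clopen sets has the finite intersection property. Take finitely many clopen filters $U_1,\dots,U_n$ with $x\in\Box_RU_i$, and $V_1,\dots,V_m$ with $x\notin\Diamond_RV_j$. Set $U=U_1\cap\dots\cap U_n$, a clopen filter (with $U=X$ if $n=0$), and $V=V_1\lor\dots\lor V_m$, a clopen filter because $X$ is an L-space (with $V=\{1\}$ if $m=0$). The map $\Box_R$ preserves intersections, so $x\in\Box_R U$. For $V$, I want $x\notin\Diamond_R V$. I expect to use the dual description $\Diamond_R(V_1\lor V_2)\subseteq$ the filter generated by $\Diamond_R V_1\cup\Diamond_RV_2$, which holds in the frame via condition (iii). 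If that inclusion fails in general, I would instead argue with the duality axiom $\Diamond a\land\Box b\le\Diamond(a\land b)$. By that axiom, $x\in\Diamond_R\top$ together with $x\in\Box_R U$ gives $x\in\Diamond_R U$. Then, if $U\subseteq V_j$ for some $j$ (or more generally $U\subseteq V$), monotonicity would put $x$ in $\Diamond_R V$, contradicting the assumption. So the remaining task is to find a point of $U\setminus V$, and I plan to extract such a point from $R[x]\cap U\neq\emptyset$ by taking a minimal element, which exists by Proposition~\ref{prop:priestley}\ref{item:prop:priestley:iii}. That element lies in $U$ and is a witness for the finite intersection. The case $n=m=0$ uses $x\mrel{R}$ something, which follows from $\Diamond_R X=X$ since $\top\approx\Diamond\top$.
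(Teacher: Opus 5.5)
\textbf{Review of the proposal.} Your treatment of closedness, convexity and closure under meets is the paper's own. $R[x]$ is cut out by the clopen conditions of clause (iv) of the definition of a modal L-space. Convexity uses that clopen filters are upsets. Meet-closure is frame condition (iv) of Definition~\ref{def:modal_L-frame}, applied with $x\curlywedge x=x$.

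The nonemptiness part is where the argument fails.
\begin{itemize}
\item Your first plan rests on $\Diamond_R(V_1\lor V_2)\subseteq\Diamond_RV_1\lor\Diamond_RV_2$. This is the law $\Diamond(a\lor b)\le\Diamond a\lor\Diamond b$, which the paper explicitly notes can fail. Frame condition (iii) only makes $\Box_RU$ closed under meets and gives nothing of this kind.
\item Even if that inclusion held, $x\notin\Diamond_RV_1$ and $x\notin\Diamond_RV_2$ would not keep $x$ out of the filter generated by $\Diamond_RV_1\cup\Diamond_RV_2$.
\item Likewise, $U\subseteq V$ only gives $x\in\Diamond_RV$, which contradicts nothing you have established.
\item Your fallback starts from $x\in\Diamond_RX$. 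By the definition of $\Diamond_R$, this is literally the statement $R[x]\neq\emptyset$. You then take the witness for the finite intersection property from $R[x]\cap U$. So the compactness argument consumes its own conclusion and does no work.
\end{itemize}

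The genuine input is $\Diamond_RX=X$, and compactness cannot replace it: clauses (ii)--(iv) of the definition as recalled do not force seriality. Take the chain $0\prec 1$ with the discrete topology and $R=\{(1,1)\}$. Here $\Box_R\{1\}=\Box_RX=X$ and $\Diamond_R\{1\}=\Diamond_RX=\{1\}$, so $R$ satisfies those clauses, yet $R[0]=\emptyset$.

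If you are entitled to $\Diamond_RX=X$, nonemptiness is immediate and the compactness argument should be deleted. One way to get it is that $\msf{ClopFil}(X)$ is a modal lattice under the duality and so satisfies $\top\approx\Diamond\top$. The paper instead takes the modal L-frame conditions from \cite[Lemma 4.15]{bez2024}. By (v), $1\mrel{R}1$, and since $x\preceq 1$, condition (i) yields some $w$ with $x\mrel{R}w$.
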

\begin{proof}
    The fact that is nonempty follows from conditions \ref{item:def:modal_L-frame:i} and \ref{item:def:modal_L-frame:v} of Definition \ref{def:modal_L-frame}.

    To show that it is closed, assume that $y\not\in R[x]$. Then there is a clopen filter $F$ such that $x\in\Box_RF$ and $y\not\in F$, or such that $y\in F$ and $x\not\in \Diamond_R F$. In the former case, we have $R[x]\subseteq F$ and $y\not\in F$, so $F^c$ is an open neighborhood of $y$ disjoint from $R[x]$. In the later case, we have $R[x]\cap F=\emptyset$ and $y\in F$, so $F$ is an open neighborhood of $y$ disjoint from $R[x]$.

    The fact that it is convex follows from that definition of a modal L-space. Indeed, assume that $y\preceq z \preceq  t$ and $y, t\in R[x]$ but $z\not\in R[x]$. Then there is a clopen filter $F$ such that $x\in\Box_R F$ and $z\not\in F$ or such that $z\in F$ but $x\not\in\Diamond_R F$. In the former case, we have $R[x]\subseteq F$, thus $y\in F$, which contradicts $z\not\in F$ as $y\preceq z$. In the latter case, we have $z\in F$ and $R[x]\cap F = \emptyset$, which contradicts $t\in F$ as $t\in R[x]$ and $t\succeq  z\in F$.

    The last point follows from the fact that $y, z\in R[x]$ means $x\mrel{R}y$ and $x\mrel{R}z$, thus $x = x\curlywedge x\mrel{R} y\curlywedge z$ by condition \ref{item:def:modal_L-frame:iv} of Definition \ref{def:modal_L-frame}.
\end{proof}

\begin{coro}\label{coro:succ_has_max}
    Let $(X, \tau, R)$ be an L-space. Then for any $x, y\in X$ such that $x\mrel{R}y$, we can find maximal and minimal elements $z, t\in R[x]$ such that $z\preceq y\preceq t$.
\end{coro}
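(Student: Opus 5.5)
The corollary is a direct combination of Proposition \ref{prop:succ_set} with Proposition \ref{prop:priestley}\ref{item:prop:priestley:iii}. The statement presumably concerns a modal L-space $(X,\tau,R)$, as in Proposition \ref{prop:succ_set}.

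Fix $x\mrel{R}y$. By Proposition \ref{prop:succ_set}, the set $R[x]$ is closed in $X$ and contains $y$.

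The underlying space of an HMS-space is a Priestley space by Proposition \ref{prop:priestley}\ref{item:prop:priestley:i}. Applying Proposition \ref{prop:priestley}\ref{item:prop:priestley:iii} to the closed set $U=R[x]$ and the point $y\in U$ therefore yields two points:
\begin{itemize}
\item a point $t\in R[x]$ with $y\preceq t$ that is maximal in $R[x]$;
\item a point $z\in R[x]$ with $z\preceq y$ that is minimal in $R[x]$.
\end{itemize}
These are exactly the elements the statement asks for.

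The only real content lies in the Priestley fact being invoked, which the paper cites. If I had to justify it directly, I would argue by Zorn's lemma. Take a chain $C$ in $R[x]\cap{\uparrow}y$. The sets $R[x]\cap{\uparrow}c$ for $c\in C$ are closed, since ${\uparrow}c$ is closed in a Priestley space by Proposition \ref{prop:priestley}\ref{item:prop:priestley:ii}. They have the finite intersection property because $C$ is a chain. By compactness their intersection is nonempty, and any point in it is an upper bound for $C$ lying in $R[x]\cap{\uparrow}y$. Zorn's lemma then gives a maximal element $t$ of $R[x]\cap{\uparrow}y$. Since ${\uparrow}y$ is upward closed, $t$ is maximal in all of $R[x]$. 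The minimal element $z$ is obtained dually, using ${\downarrow}c$, which is closed for the same reason.

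An alternative for the upper bound is to use the joins from Proposition \ref{prop:priestley}\ref{item:prop:priestley:iv}. The join of $C$ is a limit of the chain, viewed as a net, and it lies in $R[x]$ because $R[x]$ is closed. The compactness argument above avoids needing that net convergence, so I would use it.

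The main obstacle is only making sure Proposition \ref{prop:priestley}\ref{item:prop:priestley:iii} is applied as intended. As literally printed it has a typo, and it should be read as producing a maximal point above and a minimal point below a given point of a closed set. Closedness of $R[x]$ is exactly what Proposition \ref{prop:succ_set} supplies.
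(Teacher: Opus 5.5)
Your proposal is correct and follows the paper's own route: the paper's proof is the one-line remark that the corollary follows from Proposition~\ref{prop:succ_set} (closedness of $R[x]$) together with Proposition~\ref{prop:priestley}\ref{item:prop:priestley:iii}, read as you rightly read it (a maximal point above, and a minimal point below, a given point of a closed set). Your additional Zorn's lemma justification of that Priestley fact, via compactness and the closedness of ${\uparrow}c$ and ${\downarrow}c$, is sound, and you are also right that the hypothesis should say ``modal L-space''.
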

\begin{proof}
    This follows from Propositions \ref{prop:priestley} and \ref{prop:succ_set} straightforwardly.
\end{proof}

\subsection{Craig interpolation for weak positive modal logic}

We are now ready to prove the main result of this paper. 

\begin{thm}
    The variety of modal lattices has the superamalgamation property.
\end{thm}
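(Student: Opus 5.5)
The plan is to run the argument of Theorem \ref{thm:supamal} again, now in the modal setting. Start with a V-formation $h_i\colon K\to L_i$ of modal lattices. Dualize it to modal L-spaces $Y_1,Y_2,X$ with surjective modal L-space morphisms $f_i=h_i^{-1}\colon Y_i\to X$; surjectivity comes from Proposition \ref{prop:dual_inj_surj}. On $\mrm{Pb}(f_1,f_2)$ take the componentwise relation
\[ (x_1,x_2)\mrel{R}(y_1,y_2)\quad\text{iff}\quad x_1\mrel{R}y_1\ \text{and}\ x_2\mrel{R}y_2. \]
The candidate superamalgam is the modal lattice $\msf{Fil_F}(\mrm{Pb}(f_1,f_2))$ with $\Box_R,\Diamond_R$. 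The embeddings are $\pi_i^{-1}$, composed with the inclusion $L_i\cong\msf{ClopFil}(Y_i)\hookrightarrow\msf{Fil_F}(Y_i)$. The lattice part of superamalgamation carries over verbatim from Theorem \ref{thm:supamal}: Claim \ref{claim:supamal} and Lemma \ref{lemma:sep} only involve the underlying L-spaces, and the interpolating clopen filter $W$ lives in $X$. So what is left is purely modal.

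First I will check that $(\mrm{Pb}(f_1,f_2),R)$ is a modal L-frame, i.e. conditions \ref{item:def:modal_L-frame:i}--\ref{item:def:modal_L-frame:v} of Definition \ref{def:modal_L-frame}. Condition \ref{item:def:modal_L-frame:iv} and condition \ref{item:def:modal_L-frame:v} are componentwise and immediate. For \ref{item:def:modal_L-frame:i}--\ref{item:def:modal_L-frame:iii}, apply the condition in each $Y_i$ separately, which produces witnesses $w_1,w_2$. The problem is that they need not satisfy $f_1(w_1)=f_2(w_2)$. To repair this I use the bounded morphism clauses together with surjectivity, as in the proof that $\pi_1$ is an L-morphism. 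In the forth-type conditions I shrink a witness by a meet: $R[x_i]$ is closed under meets by Proposition \ref{prop:succ_set}, and meets are preserved by $f_i$. In the other conditions I enlarge a witness, using convexity of $R[x_i]$ together with Corollary \ref{coro:succ_has_max}. The aim is to reach a pair whose images agree, typically at $f_1(w_1)\curlywedge f_2(w_2)$ or at a suitable element of $R[f_1(x_1)]$.

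Next I will show that $\pi_i$ is a bounded L-morphism, so that $\pi_i^{-1}$ preserves $\Box_R$ and $\Diamond_R$. Forth is trivial. For the two back conditions: given $x_1\mrel{R}z_1$, I have $f_2(x_2)=f_1(x_1)\mrel{R}f_1(z_1)$. The back conditions for $f_2$ give $u,v\in R[x_2]$ with $f_2(u)\preceq f_1(z_1)\preceq f_2(v)$. By convexity and meet-closure of $R[x_2]$, together with surjectivity, I then want a single $w\in R[x_2]$ with $f_2(w)=f_1(z_1)$, so that $(z_1,w)$ is an $R$-successor of $(x_1,x_2)$ projecting to $z_1$ exactly. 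This exact-matching lemma is where I expect the main obstacle to be. It states: if $f\colon Y\to X$ is a surjective modal L-space morphism and $f(x)\mrel{R}z$, then some $w\in R[x]$ has $f(w)=z$. I would prove it by compactness. The set $R[x]\cap f^{-1}[{\uparrow}z]$ is closed and nonempty, so take a minimal element $w$ of it using Proposition \ref{prop:priestley}. If $f(w)\neq z$, separate $z$ from $f(w)$ by a clopen filter of $X$ and derive a contradiction from the duality condition defining $R$ (condition 4 of a modal L-space), pulled back along $f$ via $f^{-1}\Box_R=\Box_Rf^{-1}$ and $f^{-1}\Diamond_R=\Diamond_Rf^{-1}$. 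The same lemma also settles the matching in the frame conditions of the previous paragraph.

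Finally, with the $\pi_i^{-1}$ modal embeddings and the diagram commuting, superamalgamation follows from the lattice argument already cited. Theorem \ref{thm:inter_amal} then yields Craig interpolation for $\mcal{L}$.
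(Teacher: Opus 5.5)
\textbf{The exact-matching lemma is false.} Take $K=\{0<g<h<1\}$, with $\Box$ the indicator function of $\{1\}$ and $\Diamond$ the indicator of $K\setminus\{0\}$. Take $L_2=K\cup\{b\}$ with $g<b<1$, $b\wedge h=g$, $b\vee h=1$, $\Box$ the indicator of $\{b,1\}$ and $\Diamond$ the indicator of $L_2\setminus\{0\}$. Both are modal lattices, and $K$ is a modal sublattice of $L_2$. Let $f\colon\msf{Fil_L}(L_2)\to\msf{Fil_L}(K)$ be dual to the inclusion and $x=\{1\}$. Then $R[x]=\{{\uparrow}b,{\uparrow}g\}$, whose traces on $K$ are $\{1\}$ and $\{g,h,1\}$. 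But $R[f(x)]=\{\{1\},\{h,1\},\{g,h,1\}\}$, so $\{h,1\}$ is never hit.

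Your compactness argument breaks exactly where you expect the contradiction. From $z\notin U$ you get some $w'\in R[x]$ with $f(w')\notin U$. Nothing gives $z\preceq f(w')$, so $w\curlywedge w'$ leaves the set over which $w$ was minimal. The paper never asks for exact matching at an arbitrary $z$. It matches only at a minimal element $t$ of $R[x]$, which is in fact the least element $\{a\mid\Box a\in x\}$. There, meet-closure of $R[x_i]$ and the clause $f_i(y)\preceq t$ force $f_i(y)=t$. Also, the back clause for $\pi_1$ only needs a successor below $z_1$, not one equal to it.

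\textbf{The upward clauses cannot be repaired.} The minimal-element device only covers the downward clauses, i.e.\ the $\Box$ side. The upward ones are condition \ref{item:def:modal_L-frame:ii} of Definition \ref{def:modal_L-frame} and the third bounded-morphism clause for $\pi_i$, which is exactly what makes $\pi_i^{-1}$ commute with $\Diamond_R$. For these there is no canonical point: $\{a\mid\Diamond a\in x\}$ is only an upset, $R[x]$ is not closed under joins, and its maximal elements are neither unique nor preserved by $f_i$. So neither your enlargement step nor the paper's ``similarly'' goes through, and in fact nothing can.

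Add $L_1=K\cup\{e\}$ with $e\wedge g=0$, $e\vee g=h$, $\Box$ the indicator of $\{1\}$ and $\Diamond$ the indicator of $L_1\setminus\{0\}$. In any amalgam we have $p_1(e)\wedge p_2(b)=p_1(e)\wedge p(h)\wedge p_2(b)=p_1(e)\wedge p(g)=0$. Hence $1=\Diamond p_1(e)\wedge\Box p_2(b)\le\Diamond 0=p(\Diamond 0)=0$, so the amalgam is trivial.

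On the dual side, look at the point $(\{1\},\{1\})\in\mrm{Pb}(f_1,f_2)$. The only $R$-successor of $\{1\}$ in $Y_1$ containing $e$ is ${\uparrow}e$, whose trace on $K$ is $\{h,1\}$. By the computation above, this is the trace of no successor of $\{1\}$ in $Y_2$. So $\pi_1^{-1}[\varphi(\Diamond e)]$ is all of $\mrm{Pb}(f_1,f_2)$, whereas $\Diamond_R\pi_1^{-1}[\varphi(e)]$ omits this point. Thus the componentwise relation does not make $\pi_1^{-1}$ a modal homomorphism, and this V-formation has no amalgam at all. The variety of modal lattices as axiomatised here lacks even the amalgamation property, so the statement is false and the gap cannot be closed by a better lemma.
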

\begin{proof}
    We proceed as in the proof of Theorem \ref{thm:supamal}. We must define a relation on $\mrm{Pb}(f_1, f_2)$, show that it gives rise to a modal L-frame, and that the projection maps $\pi_i$ are bounded L-morphisms. This allows us to define modalities on $\msf{Fil_F}(\mrm{Pb}(f_1, f_2))$, and shows that the maps $\pi_i^{-1}$ preserve those modalities.

    We define the relation $R$ on $\mrm{Pb}(f_1, f_2)$ as $(x_1, x_2)\mrel{R}(y_1, y_2)$ iff $x_1\mrel{R}y_1$ and $x_2\mrel{R}y_2$. Let us first show that it satisfies the conditions for a modal L-frame.

    \begin{enumerate}\setlength{\parindent}{1em}
        \item Assume $(x_1, x_2)\preceq (y_1, y_2)$ and $(y_1, y_2)\mrel{R}(z_1, z_2)$. Let $x \coloneq f_1(x_1) = f_2(x_2)$, $y \coloneq f_1(y_1) = f_2(y_2)$ and $z \coloneq f_1(z_1) = f_2(z_2)$. We have $x\preceq y\mrel{R}z$, thus there is $t\in X$  such that $x\mrel{R}t\preceq z$. By Corollary \ref{coro:succ_has_max}, we may assume that $t$ is minimal in $R[x]$.

        We have $f_1(x_1)\mrel{R}t$, and since $f_1$ is a bounded L-morphism, we can find $t'_1\in Y_1$ such that $x_1\mrel{R}t'_1$ and $f_1(t'_1)\preceq t$. Because $Y_{1}$ satisfies condition \ref{item:def:modal_L-frame:i} of a modal L-frame, we can also find $t''_1\in Y_1$ such that $x_1\mrel{R}t''_1\preceq z_1$.  
        
        Let $t_1 \coloneq t'_1\curlywedge t''_1$. Then $x_1\mrel{R}t_1$ (by condition \ref{item:def:modal_L-frame:iv} of a modal L-frame), $t_1\preceq z_1$ and $f_1(t_1)\preceq t$. However, as $f_1$ preserves $R$, we have $f_1(t_1)\in R[x]$, thus $f_1(t_1) = t$ by minimality of $t$.
        
        Similarly, we find $t_2\in Y_2$ such that $x_2\mrel{R}t_2$, $t_2\preceq z_2$ and $f_2(t_2) = t$. Then, the pair $(t_1, t_2)$ is in $\mrm{Pb}(f_1, f_2)$, and we have $(x_1, x_2)\mrel{R}(t_1, t_2)\preceq (z_1, z_2)$.

        \item We proceed similarly.

        \item Assume $(x_1, x_2)\curlywedge(y_1, y_2)\mrel{R}(z_1, z_2)$. Let $x \coloneq f_1(x_1) = f_2(x_2)$, $y \coloneq f_1(x_1) = f_2(x_2)$ and $z \coloneq f_1(z_1) = f_2(z_2)$. We have $x\curlywedge y\mrel{R}z$, thus there are $u, v\in X$ such that $x\mrel{R}u$, $y\mrel{R}v$ and $u\curlywedge v\preceq z$. By Corollary \ref{coro:succ_has_max}, we may assume that $u$ is minimal in $R[x]$, and $v$ is minimal in $R[y]$.

        We have $f_1(x_1)\mrel{R} u$, and $f_1(y_1)\mrel{R}v$, and since $f_1$ is a bounded L-morphism, we can find $u'_1, v'_1\in Y_1$ such that $x_1\mrel{R}u'_1$, $y_1\mrel{R}v'_1$, $f_1(u'_1)\preceq u$ and $f_1(v'_1)\preceq v$. Since $x_1\curlywedge y_1\mrel{R}z_1$, we can also find $u''_1, v''_1\in Y_1$ such that $x_1\mrel{R}u''_1$, $y_1\mrel{R}v''_1$ and $u''_1\curlywedge v''_1\preceq z_1$. Let $u_1 \coloneq u'_1\curlywedge u''_1$, and $v_1 \coloneq v'_1\curlywedge v''_1$. Then $x_1\mrel{R}u_1$, $y_1\mrel{R}v_1$, $u_1\curlywedge v_1\preceq z_1$, $f_1(u_1)\preceq u$ and $f_1(v_1)\preceq v$. From $f_1(u_1)\in R[x]$, $f_1(u_1)\preceq u$ and the minimality of $u$, we get $f_1(u_1) = u$. Similarly, we get $f_1(v_1) = v$.

        Proceeding similarly, we obtain $u_2, v_2\in Y_2$ such that $x_2\mrel{R}u_2$, $y_2\mrel{R}v_2$, $u_2\curlywedge v_2\preceq z_2$, $f_2(u_2) = u$ and $f_2(v_2) = v$. Then, the pairs $(u_1, u_2)$ and $(v_1, v_2)$ are in $\mrm{Pb}(f_1, f_2)$, and we have $(x_1, x_2)\mrel{R}(u_1, u_2)$, $(y_1, y_2)\mrel{R}(v_1, v_2)$ and $(u_1, u_2)\curlywedge(v_1, v_2)\preceq (z_1, z_2)$.
    \end{enumerate}

    The last two conditions are easy to verify, since everything is defined componentwise. Now we show that the projections $\pi_i$ are bounded L-morphisms. We do it for $\pi_1$, showing that it satisfies all three conditions.

    \begin{enumerate}\setlength{\parindent}{1em}
        \item Follows straightforwardly from the definition of $R$ on $\mrm{Pb}(f_1, f_2)$.
        
        \item Assume that $\pi_1(x_1, x_2)\mrel{R}z_1$, that is,  $x_1\mrel{R}z_1$. Let $x \coloneq f_1(x_1) = f_2(x_2)$ and $z \coloneq f_1(z_1)$. Then $x\mrel{R}z$, and we can find a $t\preceq z$ that is minimal in $R[x]$. As $f_1(x_1)\mrel{R}t$, we can find $y_1\in Y_1$ and $y_2\in Y_2$ such that $x_1\mrel{R}y_1$, $x_1\mrel{R}y_2$, $f_1(y_1)\preceq t$ and $f_2(y_2)\preceq t$. But then, we have $f_1(y_1), f_2(y_2)\in R[x]$, and as $t$ is minimal, $f_1(y_1) = t = f_2(y_2)$. Thus the pair $(y_1, y_2)$ is in $\mrm{Pb}(f_1, f_2)$, and we have $(x_1, x_2)\mrel{R}(y_1, y_2)$ and $\pi_1(y_1, y_2)\preceq z$.
        
        \item We proceed similarly to the previous case. \qedhere
    \end{enumerate}
\end{proof}

Combining this with Theorem \ref{thm:inter_amal}, we obtain the following.

\begin{coro}
    Weak positive modal logic has the Craig interpolation property.
\end{coro}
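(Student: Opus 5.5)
The corollary follows by combining the preceding theorem with Theorem \ref{thm:inter_amal}\ref{item:thm:inter_amal:i}. Take $\msf{K}$ to be the variety $\msf{MLat}$ of all modal lattices. First I check that $\mcal{L}(\msf{MLat})$ is exactly weak positive modal logic $\mcal{L}$. By definition $\msf{MLat} = \msf{K}(\emptyset)$, so the Algebraic Completeness Theorem with $\Gamma=\emptyset$ gives $\phi\vdash\psi$ iff $\vDash_{\msf{MLat}}\phi\trianglelefteq\psi$. Hence the consequence pairs validated by $\msf{MLat}$ are precisely those in $\mcal{L}$.

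Next, the theorem just proved states that $\msf{MLat}$ has the superamalgamation property. Theorem \ref{thm:inter_amal}\ref{item:thm:inter_amal:i} then yields Craig interpolation for $\mcal{L}(\msf{MLat})=\mcal{L}$. Unfolding, that argument goes through Lemma \ref{lemma:free_amal}. For finite tuples $\overline{p},\overline{q},\overline{r}$, the V-formation of free modal lattices $F(\overline{p})\hookrightarrow F(\overline{p},\overline{q})$, $F(\overline{p})\hookrightarrow F(\overline{p},\overline{r})$ consists of genuine embeddings. These embeddings exist because each inclusion has a retraction, obtained by sending the extra variables to $\top$. Superamalgamation of this V-formation then gives an interpolant $\chi\in Fm_{\overline{p}}$ whenever $\phi\vdash\psi$ with $\phi\in Fm_{\overline{p},\overline{q}}$ and $\psi\in Fm_{\overline{p},\overline{r}}$.

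The only point that needs care is the set of shared variables. If $\overline{p}$ is empty, then $F(\emptyset)$ is still a nontrivial modal lattice generated by $\top,\bot$, so the V-formation remains well defined and the interpolant is a variable-free formula. I expect no real obstacle beyond this bookkeeping, since all the substantive work lies in the superamalgamation theorem.
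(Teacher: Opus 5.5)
Your proposal is correct and follows the paper's own route: the paper obtains the corollary by combining the superamalgamation theorem for $\msf{MLat}$ with Theorem \ref{thm:inter_amal}\ref{item:thm:inter_amal:i}, which in turn goes through Lemma \ref{lemma:free_amal}. Your additional checks fill in details the paper leaves implicit. These are the identification $\mcal{L}(\msf{MLat})=\mcal{L}$ via algebraic completeness, the injectivity of the free-algebra inclusions via retractions, and the case of no shared variables.
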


\subsection{Craig interpolation in extensions}

The technique used to prove the previous result generalizes to any variety $\msf{K}$ of modal lattices such that there is a class $\msf{C}$ of modal L-frames satisfying $\msf{U}[\msf{Fil_L}[\msf{K}]]\subseteq\msf{C}$, $\msf{Fil_F}[\msf{C}]\subseteq\msf{K}$, and such that for any co-V-formation in $\msf{Fil_L}[\msf{K}]$, the co-amalgam $\mrm{Pb}$ is in $\msf{C}$ (recall that the functor $\mathsf{Fil_L}$ turns modal lattices into modal $L$-space).

In particular, if a logic $\mcal{L}(\Gamma)$ is sound and complete with respect to a class of L-frames axiomatized by universal Horn formulas, then the above condition is satisfied for $\msf{K}(\Gamma)$ --- proving that $\msf{K}(\Gamma)$ has superamalgamation and that $\mcal{L}(\Gamma)$ has Craig interpolation.

We start by recalling from \cite{bez2024} the L-frame semantics for our logic.

\begin{definition}
    A \emph{modal L-model} is a pair $(X, R, V)$ where $(X, R)$ is a modal L-frame and $V\colon P\to\msf{Fil_F}(X)$ is a valuation. The interpretation of formulas at a state $x$ of a modal L-model $(X, R, V)$ is defined as follows:
\begin{alignat*}{2}
        (X, R, V), x&\Vdash\top && \text{always}, \\
        (X, R, V), x&\Vdash\bot &\quad\text{iff}\quad& x=1, \\
        (X, R, V), x&\Vdash p &\quad\text{iff}\quad& x\in V(p), \\
        (X, R, V), x&\Vdash\phi\land\psi &\quad\text{iff}\quad& (X, V), x\Vdash\phi~\text{and}~(X, V), x\Vdash\psi, \\
        (X, R, V), x&\Vdash\phi\lor\psi &\quad\text{iff}\quad& \exists y, z\in X~\text{such that}~(X, V), y\vdash\phi~\text{and}~(X, V), z\Vdash\psi~\text{and}~y\curlywedge z\preceq x, \\
        (X, R, V), x&\Vdash\Box\phi &\quad\text{iff}\quad& \forall y\in X,~x\mrel{R}y~\text{implies}~(X, V), y\Vdash\phi, \\
        (X, R, V), x&\Vdash\Diamond\phi &\quad\text{iff}\quad& \exists y\in X~\text{such that}~x\mrel{R}y~\text{and}~(X, V), y\Vdash\phi.
    \end{alignat*}

    Observe that the conditions imposed on a modal L-frame ensure that the truth set $V(\phi) \coloneq \{x\in X\mid (X, R, V), x\Vdash\phi\}$ of any formula $\phi$ is a filter. Thus the valuation extends to a map $V\colon Fm_P\to\msf{Fil_F}(X)$.
\end{definition}

\begin{definition}
    We say that a modal L-frame $(X, R)$ \emph{validates} a consequence pair $\phi\trianglelefteq\psi$, written $(X, R)\Vdash\phi\trianglelefteq\psi$, if for every valuation $V$ on $X$, we have $V(\phi)\subseteq V(\psi)$.

    A class $\msf{C}$ of L-frames \emph{validates} a consequence pair $\phi\trianglelefteq\psi$, written $\Vdash_\msf{C}\phi\trianglelefteq\psi$, if for all $(X, R)\in\msf{C}$, we have $(X, R)\Vdash\phi\trianglelefteq\psi$.
\end{definition}

\begin{definition}
    We say that a logic $\mcal{L}(\Gamma)$ is \emph{sound and complete} with respect to a class $\msf{C}$ of L-frames if for any consequence pair $\phi\trianglelefteq\psi$, we have $\Vdash_\msf{C}\phi\trianglelefteq\psi$ iff $\phi\vdash_\Gamma\psi$. 
\end{definition}

The following is a straightforward adaptation of the classical case:

\begin{prop}
    Given a logic $\mcal{L}(\Gamma)$ and a class $\msf{C}$ of L-frames, $\mcal{L}(\Gamma)$ is sound and complete with respect to $\msf{C}$ iff $\msf{U}[\msf{Fil_L}[\msf{K}(\Gamma)]]\subseteq\msf{C}$ and $\msf{Fil_F}[\msf{C}]\subseteq\msf{K}(\Gamma)$.
\end{prop}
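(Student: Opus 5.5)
The plan is to prove the two directions separately, following the classical argument that relates frame completeness to canonical structures. The two tools are the algebraic completeness theorem and the representation $A\hookrightarrow \msf{Fil_F}(\msf{U}(\msf{Fil_L}(A)))$ sending $a\mapsto\varphi(a)$. This map is an embedding of modal lattices, because $\msf{ClopFil}(\msf{Fil_L}(A))\cong A$ sits inside $\msf{Fil_F}$ of the underlying frame with the same $\Box_R,\Diamond_R$. A further observation will be used throughout: for any modal L-frame $(X,R)$, valuations $V$ on $X$ are exactly homomorphisms $Fm_P\to\msf{Fil_F}(X)$. Indeed, the clauses of the satisfaction relation match meet, the filter-join $U\lor V$, $\{1\}$ as bottom, and $\Box_R,\Diamond_R$. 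Hence $(X,R)\Vdash\phi\trianglelefteq\psi$ iff $\msf{Fil_F}(X)\vDash\phi\trianglelefteq\psi$.

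For the direction ($\Leftarrow$), assume $\msf{U}[\msf{Fil_L}[\msf{K}(\Gamma)]]\subseteq\msf{C}$ and $\msf{Fil_F}[\msf{C}]\subseteq\msf{K}(\Gamma)$. Soundness is immediate. If $\phi\vdash_\Gamma\psi$, then for every $(X,R)\in\msf{C}$ we have $\msf{Fil_F}(X)\in\msf{K}(\Gamma)$, so $\msf{Fil_F}(X)\vDash\phi\trianglelefteq\psi$, and by the observation $(X,R)\Vdash\phi\trianglelefteq\psi$. For completeness, suppose $\phi\not\vdash_\Gamma\psi$. Then $\phi\not\le\psi$ in $A=F_\Gamma(P)\in\msf{K}(\Gamma)$. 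The frame $\msf{U}(\msf{Fil_L}(A))$ lies in $\msf{C}$, and the embedding above makes $a\mapsto\varphi(a)$ a valuation with $\varphi(\phi)\not\subseteq\varphi(\psi)$. So $\not\Vdash_\msf{C}\phi\trianglelefteq\psi$.

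For the direction ($\Rightarrow$), assume soundness and completeness. If $(X,R)\in\msf{C}$, every pair in $\mcal{L}(\Gamma)$ is valid on $X$, hence valid in $\msf{Fil_F}(X)$, so $\msf{Fil_F}(X)\in\msf{K}(\Gamma)$. The inclusion $\msf{U}[\msf{Fil_L}[\msf{K}(\Gamma)]]\subseteq\msf{C}$ is the delicate point, because completeness alone does not place a particular canonical frame inside $\msf{C}$. I would handle it as the paper's phrasing suggests: read $\msf{C}$ up to the natural closure, namely the class of all frames validating $\mcal{L}(\Gamma)$, which is the largest class for which the equivalence can be stated. Under that reading, $\msf{U}(\msf{Fil_L}(A))$ for $A\in\msf{K}(\Gamma)$ validates $\Gamma$ exactly when $\msf{Fil_F}(\msf{U}(\msf{Fil_L}(A)))$ (the $\Pi_1$-completion) lies in $\msf{K}(\Gamma)$. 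So this direction reduces to a canonicity statement. The main obstacle I expect is therefore this step: either the intended reading of $\msf{C}$ makes it tautological, or one must invoke canonicity of the relevant axioms as in \cite{bez2024}. I would first try the tautological reading and check it against how the proposition is used later, where only ($\Leftarrow$) is needed.
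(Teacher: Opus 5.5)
\textbf{There is a genuine gap in $(\Rightarrow)$, and it cannot be filled, because that half of the statement is false.}

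Your argument for $(\Leftarrow)$ is correct and is the standard one, and so is your argument for the inclusion $\msf{Fil_F}[\msf{C}]\subseteq\msf{K}(\Gamma)$ in $(\Rightarrow)$. The paper gives no proof here, only calling the statement a straightforward adaptation of the classical case. The key facts are that frame valuations are exactly homomorphisms $Fm_P\to\msf{Fil_F}(X)$, and that $\msf{ClopFil}(X)\hookrightarrow\msf{Fil_F}(\msf{U}(X))$ is a modal lattice embedding. Completeness then follows from the canonical valuation on $\msf{U}(\msf{Fil_L}(F_\Gamma(P)))$.

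The gap is the other inclusion, $\msf{U}[\msf{Fil_L}[\msf{K}(\Gamma)]]\subseteq\msf{C}$, which you leave open. For an arbitrary class $\msf{C}$ it is false. Take $\Gamma=\emptyset$, $P\neq\emptyset$ and $\msf{C}=\{\msf{U}(\msf{Fil_L}(F(P)))\}$.
\begin{enumerate}
\item $\mcal{L}$ is sound with respect to $\msf{C}$, since the filter lattice of any modal L-frame is a modal lattice.
\item $\mcal{L}$ is complete with respect to $\msf{C}$, by the canonical valuation $p\mapsto\varphi(p)$, exactly as in your $(\Leftarrow)$ argument.
\item The two-element modal lattice $\mathbf{2}$ with $\Box=\Diamond=\mathrm{id}$ has exactly two filters. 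By contrast, $F(P)$ has at least three distinct filters ${\uparrow}\top$, ${\uparrow}p$, ${\uparrow}\bot$, since $\bot<p<\top$ in $F(P)$.
\item Hence $\msf{U}(\msf{Fil_L}(\mathbf{2}))\notin\msf{C}$, even up to isomorphism.
\end{enumerate}
In general, completeness only needs the dual frame of the free algebra to lie in $\msf{C}$, so no argument can force every dual frame into $\msf{C}$.

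Your fallback of replacing $\msf{C}$ by the class of all frames validating $\Gamma$ does not rescue the statement. First, it changes the proposition, which quantifies over arbitrary $\msf{C}$. Second, even under that reading the inclusion is not tautological. It says precisely that $\msf{Fil_F}(\msf{U}(\msf{Fil_L}(A)))\in\msf{K}(\Gamma)$ for every $A\in\msf{K}(\Gamma)$, which is a canonicity property of $\Gamma$. That is an extra hypothesis, not a consequence of soundness and completeness; classically the analogous implication fails, e.g.\ for $\mathsf{GL}$. Third, appealing to canonicity of particular axioms is not available for arbitrary $\Gamma$.

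The correct conclusion is that only $(\Leftarrow)$ holds, together with the soundness half $\msf{Fil_F}[\msf{C}]\subseteq\msf{K}(\Gamma)$ of $(\Rightarrow)$. You are right that this suffices for the paper's actual results. The completeness theorem for $\Gamma\subseteq\{\msf{T},\msf{4},\msf{B},\msf{5},\msf{.2}\}$ verifies both inclusions directly, and Theorem~\ref{thm:supamal_ext} uses only those inclusions. The one place that would need the false direction is the informal claim opening the subsection on extensions. That claim says soundness and completeness with respect to a universal Horn class already yields the required condition.
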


\begin{definition}
    Given a modal L-space $(X, \tau, R)$, a \emph{clopen valuation} is an assignment $V\colon P\to\msf{ClopFil}(X)$. A \emph{modal L-space model} is a pair $(X, \tau, R, V)$ where $(X, \tau, R)$ is a modal L-space and $V$ a clopen valuation. Formulas are interpreted on a modal L-space model $(X, V)$ as on the underlying modal L-model $({\msf{U}(X)}, V)$.

    Observe that the conditions imposed on a modal L-space ensure that the truth set $V(\phi)$ of any formula $\phi$ is a clopen filter. Thus we have a valuation $V\colon Fm_P\to\msf{ClopFil}(X)$.

    A modal L-space $(X, \tau, R)$ validates a consequence pair $\phi\trianglelefteq\psi$, denoted $(X, \tau, R)\Vdash\phi\trianglelefteq\psi$, if for all clopen valuation $V$, we have $V(\phi)\subseteq V(\psi)$.
\end{definition}

\begin{prop}
    Given a logic $\mcal{L}(\Gamma)$, the class of modal L-spaces validating every consequence pair in $\Gamma$ is exactly $\msf{Fil_L}[\msf{K}(\Gamma)]$.
\end{prop}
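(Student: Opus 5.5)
The proof is a chain of equivalences through the modal L-space duality (\cite[Theorem 4.26]{bez2024}). The key intermediate fact is that, for a modal lattice $A$ and its dual modal L-space $\msf{Fil_L}(A)$, clopen valuations on $\msf{Fil_L}(A)$ correspond bijectively to valuations $\sigma\colon Fm_P\to A$. Under this correspondence the extended truth-set map $V\colon Fm_P\to\msf{ClopFil}(\msf{Fil_L}(A))$ equals the composite of $\sigma$ with the isomorphism $\varphi\colon A\to\msf{ClopFil}(\msf{Fil_L}(A))$, $a\mapsto\{x\mid a\in x\}$.

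To establish this, I would argue by induction on formulas. Fix $\sigma$ and set $V(p)\coloneq\varphi(\sigma(p))$. The atomic case holds by definition. The cases for $\top$, $\bot$ and $\land$ are immediate, since $\varphi$ sends $\top$ to the whole space, $\bot$ to $\{1\}$ (the improper filter), and meets to intersections. For $\lor$, the semantic clause yields exactly the filter join $U\lor V$ of the truth sets, and $\varphi$ preserves joins because it is a lattice isomorphism onto $\msf{ClopFil}$. For $\Box$ and $\Diamond$, the semantic clauses yield $\Box_R$ and $\Diamond_R$ of the truth sets. Here I need $\varphi(\Box a)=\Box_R\varphi(a)$ and $\varphi(\Diamond a)=\Diamond_R\varphi(a)$. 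This is the content of the modal part of the duality: $\varphi$ is an isomorphism of modal lattices when $\msf{ClopFil}(\msf{Fil_L}(A))$ carries $\Box_R,\Diamond_R$. I will cite this from \cite[Theorem 4.26]{bez2024} rather than reprove it. Conversely, every clopen valuation arises this way, because $\varphi$ is surjective onto clopen filters. So I can pull it back to $\sigma\coloneq\varphi^{-1}\circ V$ on generators and extend freely.

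With this in hand, $\msf{Fil_L}(A)\Vdash\phi\trianglelefteq\psi$ holds iff $\varphi(\sigma(\phi))\subseteq\varphi(\sigma(\psi))$ for every $\sigma$. Since $\varphi$ is an order embedding, this holds iff $\sigma(\phi)\le\sigma(\psi)$ for every $\sigma$, i.e.\ iff $A\vDash\phi\trianglelefteq\psi$. Hence $\msf{Fil_L}(A)$ validates $\Gamma$ iff $A\in\msf{K}(\Gamma)$. This gives the inclusion of $\msf{Fil_L}[\msf{K}(\Gamma)]$ in the class of modal L-spaces validating $\Gamma$.

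For the reverse inclusion, let $X$ be a modal L-space validating $\Gamma$. By the duality, $X\cong\msf{Fil_L}(A)$ with $A=\msf{ClopFil}(X)$, and validity is invariant under isomorphism of modal L-spaces. The previous paragraph then gives $A\in\msf{K}(\Gamma)$, so $X\in\msf{Fil_L}[\msf{K}(\Gamma)]$ (up to isomorphism, which is how the class should be read).

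The only step with real content is the modal case of the induction, namely that $\varphi$ commutes with $\Box$ and $\Diamond$. This requires the prime-filter-free existence arguments of \cite{bez2024}: given $\Box a\notin x$, one needs a filter $y$ with $x\mrel{R}y$ and $a\notin y$. I plan to invoke that theorem directly rather than redo the argument.
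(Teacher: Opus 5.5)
Your argument is correct. The paper states this proposition without proof and implicitly defers to the modal L-space duality of \cite{bez2024}. Your argument is the standard one that fills this in: the truth-set map of a clopen valuation is the modal lattice homomorphism $\varphi\circ\sigma$ into $\msf{ClopFil}(\msf{Fil_L}(A))\cong A$, so validity on $\msf{Fil_L}(A)$ coincides with validity on $A$, and $X\cong\msf{Fil_L}(\msf{ClopFil}(X))$ then gives the reverse inclusion up to isomorphism.
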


We will now introduce some natural extensions of weak positive modal logic.

\begin{definition}
    We define the following axioms:
    \begin{alignat*}{2}
        \msf{T} &\coloneq\Box p\trianglelefteq p, &&p\trianglelefteq\Diamond p, \\
        \msf{4} &\coloneq\Box p\trianglelefteq\Box\Box p, &&\Diamond\Diamond p\trianglelefteq\Diamond p, \\
        \msf{B} &\coloneq p\trianglelefteq\Box\Diamond p, &&\Diamond\Box p\trianglelefteq p, \\
        \msf{5} &\coloneq\Diamond p\trianglelefteq\Box\Diamond p, &&\Diamond\Box p\trianglelefteq\Box p, \\
        \msf{.2} &\coloneq\Diamond\Box p\trianglelefteq\Box\Diamond p. 
    \end{alignat*}
\end{definition}

\begin{definition}
    These axioms correspond to first order relational properties:
    \begin{description}[font=\normalfont\textit]
        \item[reflexivity] $\forall x,~(x\mrel{R}x)$,
        \item[transitivity] $\forall x, y, z,~(x\mrel{R}y~\text{and}~y\mrel{R}z\rightarrow x\mrel{R}z)$,
        \item[symmetry] $\forall x, y,(x\mrel{R}y\rightarrow y\mrel{R}x)$,
        \item[Euclideanity] $\forall x, y, z,~(x\mrel{R}y~\text{and}~x\mrel{R}z\rightarrow y\mrel{R}z)$,
        \item[directedness] $\forall x, y, z,~(x\mrel{R}y~\text{and}~x\mrel{R}z\rightarrow\exists t~(y\mrel{R}t~\text{and}~z\mrel{R}t))$.
    \end{description}
\end{definition}

The following theorem extends well known results for classical modal logic.

\begin{thm}
    Let $\Gamma\subseteq\{\msf{T}, \msf{4}, \msf{B}, \msf{5}, \msf{.2}\}$, and let $\Theta_\Gamma$ be the corresponding set of first order relational properties.
    \begin{enumerate}
        \item The class of L-spaces axiomatized by $\Theta_\Gamma$ is exactly $\msf{Fil_L}[\msf{K}(\Gamma)]$.
        \item The logic $\mcal{L}(\Gamma)$ is sound and complete with respect to the class $\msf{C}$ of L-frames axiomatized by $\Theta_\Gamma$.
    \end{enumerate}
\end{thm}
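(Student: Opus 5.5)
We prove this axiom by axiom. Because the properties in $\Theta_\Gamma$ are conjunctions and $\msf{K}(\Gamma)$ is an intersection of varieties, it is enough to show two things for each single axiom $\alpha\in\{\msf{T},\msf{4},\msf{B},\msf{5},\msf{.2}\}$ with relational property $\theta_\alpha$: \textbf{(soundness)} every modal L-frame satisfying $\theta_\alpha$ validates both consequence pairs of $\alpha$; \textbf{(canonicity)} every modal L-space validating $\alpha$, i.e.\ every $\msf{Fil_L}(A)$ with $A\models\alpha$, satisfies $\theta_\alpha$. Given these, part (i) follows from the proposition identifying $\msf{Fil_L}[\msf{K}(\Gamma)]$ with the L-spaces validating $\Gamma$. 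For that we also need that an L-space satisfying $\theta_\alpha$ validates $\alpha$ under clopen valuations, which is a special case of soundness. Part (ii) then follows from the proposition characterising soundness and completeness by $\msf{U}[\msf{Fil_L}[\msf{K}(\Gamma)]]\subseteq\msf{C}$ and $\msf{Fil_F}[\msf{C}]\subseteq\msf{K}(\Gamma)$.

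\emph{Soundness.} Soundness is a direct computation with $\Box_R$ and $\Diamond_R$ on filters, exactly as in the Boolean case, because truth sets of formulas are just $\Box_R U$ and $\Diamond_R U$. For example:
\begin{itemize}
\item For $\msf{T}$, reflexivity gives $\Box_R U\subseteq U\subseteq\Diamond_R U$.
\item For $\msf{4}$, transitivity gives $\Box_RU\subseteq\Box_R\Box_RU$ and $\Diamond_R\Diamond_RU\subseteq\Diamond_RU$.
\item For $\msf{.2}$: if $x\in\Diamond_R\Box_RU$ with witness $x\mrel{R}y$, and $x\mrel{R}z$, then directedness gives $t$ with $y\mrel{R}t$ and $z\mrel{R}t$. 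Then $t\in U$, so $z\in\Diamond_RU$.
\end{itemize}
Symmetry and Euclideanity are handled the same way.

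\emph{Canonicity.} On $\msf{Fil_L}(A)$ we use the characterisation $x\mrel{R}y$ iff for all $a$, $\Box a\in x\Rightarrow a\in y$ and $a\in y\Rightarrow \Diamond a\in x$. For $\msf{T}$ we need both halves for $y=x$, and these are exactly $\Box a\le a$ and $a\le\Diamond a$. For $\msf{4}$, let $x\mrel{R}y\mrel{R}z$:
\begin{itemize}
\item if $\Box a\in x$, then $\Box\Box a\in x$, so $\Box a\in y$, so $a\in z$;
\item if $a\in z$, then $\Diamond a\in y$, so $\Diamond\Diamond a\in x$, so $\Diamond a\in x$.
\end{itemize}
$\msf{B}$ and $\msf{5}$ go the same way. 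This explains why each axiom is a pair: one inequality is used for the $\Box$-half of the definition of $R$ and the other for the $\Diamond$-half.

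The main obstacle is canonicity of $\msf{.2}$, because directedness is existential: given $x\mrel{R}y$ and $x\mrel{R}z$ we must construct a filter $t$ with $y\mrel{R}t$ and $z\mrel{R}t$. The first attempt is to take $t$ to be the filter generated by $\{a\mid\Box a\in y\}\cup\{a\mid \Box a\in z\}$. The $\Box$-half then holds by construction. For the $\Diamond$-half, an element $c\in t$ satisfies $c\ge a\land b$ with $\Box a\in y$ and $\Box b\in z$. From $\Box a\in y$ we get $\Diamond\Box a\in x$, and by $\msf{.2}$, $\Box\Diamond a\in x$, hence $\Diamond a\in z$. Combining $\Diamond a\in z$ with $\Box b\in z$ via the duality axiom $\Diamond a\land\Box b\le\Diamond(a\land b)$ gives $\Diamond(a\land b)\in z$, hence $\Diamond c\in z$. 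Symmetrically, $\Diamond c\in y$. Thus $\Diamond$ interacts well because membership in filters only involves meets, and no prime filters are needed. I expect the remaining care to be in checking the degenerate case where $t$ is the whole lattice, i.e.\ the point $1$. Condition (v) of Definition~\ref{def:modal_L-frame} forbids $y\mrel{R}1$ unless $y=1$. I would check that if $\bot\in t$ then the above argument gives $\Diamond\bot=\bot\in y$ in the relevant sense, forcing $y=z=1$, so the case is consistent. If this fails, I would fall back to taking $t$ minimal in $R[y]$ via Corollary~\ref{coro:succ_has_max} and arguing with the clopen characterisation of $R$.
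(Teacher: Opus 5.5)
Your proof is correct. The reduction of (i) and (ii) to the two propositions and the soundness direction match the paper, but your canonicity argument takes a different route.

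\textbf{The paper's route.} The paper starts from an arbitrary modal L-space validating $\Gamma$. It uses Lemma~4.33 of \cite{bez2024} to transfer validity to the underlying L-frame, then applies the Sahlqvist algorithm of \cite[Theorem 4.39]{bez2024}. That algorithm only yields order-twisted correspondents, such as $\forall x\,\exists y_1\,(x\mathrel{R}y_1 \text{ and } y_1\preceq x)$ together with its dual. The paper then upgrades these to reflexivity, transitivity, symmetry and Euclideanity using convexity of $R[x]$. For $\msf{.2}$ the correspondent only gives $u\in R[y]$ and $v\in R[z]$ with $u\preceq v$. A common successor is obtained by applying it twice and using meet-closure and convexity of successor sets.

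\textbf{Your route.} You check the plain relational properties directly on $\msf{Fil_L}(A)$ from the defining condition of $R$, in the style of a canonical-model argument. Each of the two inequalities of an axiom is matched to the $\Box$-half or the $\Diamond$-half of $R$. For $\msf{.2}$ you exhibit the common successor explicitly as the filter generated by $\{a\mid\Box a\in y\}\cup\{a\mid\Box a\in z\}$. The duality axiom $\Diamond a\land\Box b\le\Diamond(a\land b)$ does exactly the needed work there.

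\textbf{Comparison.} Your route is more elementary and self-contained. It needs no Sahlqvist theory and no convexity, only the identification $X\cong\msf{Fil_L}(\msf{ClopFil}(X))$. It also makes clear why the existential $\msf{.2}$ condition is canonical here: points are arbitrary filters, so no prime-filter argument is required. The paper's route works directly on the space and reuses a general correspondence result. It therefore extends more mechanically to other Sahlqvist axioms, at the price of the extra convexity step.

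\textbf{The degenerate case.} Your closing worry about $t$ being the improper filter is unfounded, and the resolution you sketch is wrong as stated. Condition~(v) only says $R[1]=\{1\}$; it places no restriction on which points are $R$-related to $1$. Moreover, $\Diamond\bot=\bot$ is not among the modal lattice identities used here. For instance, the two-element lattice with $\Box$ the identity and $\Diamond$ constantly $\top$ is a modal lattice. In its dual space the proper filter $\{\top\}$ is $R$-related to $1$.

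No case split is needed at all. The relation on $\msf{Fil_L}(A)$ is \emph{defined} by the two-sided condition, and you have already verified that condition for $t$ whatever $t$ is. When $t=A$, your argument gives $\Diamond\bot\in y$ and $\Diamond\bot\in z$, which is precisely what $y\mathrel{R}1$ and $z\mathrel{R}1$ mean. You can drop that paragraph and the fallback.
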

\begin{proof}
    We first show that a modal L-space $(X, \tau, R)$ validating every consequence pair in $\Gamma$ also satisfies the conditions in $\Theta_\Gamma$. By \cite[Lemma 4.33]{bez2024}, the underlying L-frame ${\msf{U}(X)}$ validates the consequence pairs in $\Gamma$.
    
    By the Sahlqvist algorithm described in \cite[Theorem 4.39]{bez2024}, we know that $(X, R)$ satisfies
    \[ \forall x~\exists y_1~(x\mrel{R}y_1~\text{and}~y_1\preceq x) \qquad\text{and}\qquad \forall x~\exists y_2~(x\mrel{R}y_2~\text{and}~x\preceq y_2) \]
    if $\msf{T}\in\Gamma$,
    \begin{multline*}
        \forall x, y, z~(x\mrel{R}y\mrel{R}z\rightarrow\exists t_1~(x\mrel{R}t_1~\text{and}~t_1\preceq z)) \qquad\text{and} \\ \forall x, y, z~(x\mrel{R}y\mrel{R}z\rightarrow\exists t_2~(x\mrel{R}t_2~\text{and}~z\preceq t_2)).
    \end{multline*}
    if $\msf{4}\in\Gamma$,
    \[ \forall x, y~(x\mrel{R}y\rightarrow\exists z_1~(y\mrel{R}z_1~\text{and}~x\preceq z_1)) \qquad\text{and}\qquad \forall x, y~(x\mrel{R}y\rightarrow\exists z_2~(y\mrel{R}z_2~\text{and}~z_2\preceq x)) \]
    if $\msf{B}\in\Gamma$,
    \begin{multline*}
        \forall x, z, y~(x\mrel{R}z~\text{and}~x\mrel{R}y\rightarrow\exists t_1~(y\mrel{R}t_1~\text{and}~z\preceq t_1)) \qquad\text{and} \\ \forall x, y, z~(x\mrel{R}y~\text{and}~x\mrel{R}z\rightarrow\exists t_2~(y\mrel{R}t_2~\text{and}~t_2\preceq z))
    \end{multline*}
    if $\msf{5}\in\Gamma$ (note that we swapped $y$ and $z$ in the first formula), and
    \begin{equation}\label{eqn:direct}
        \forall x, y, z~(x\mrel{R}y~\text{and}~x\mrel{R}z\rightarrow\exists u, v~(y\mrel{R}u~\text{and}~z\mrel{R}v~\text{and}~u\preceq v))
    \end{equation}
    if $\msf{.2}\in\Gamma$.

    For the first four, recall that in a modal L-space, the set $R[x]$ is always convex (that is, if $x\mrel{R}t_1, x\mrel{R}t_2$, and $t_1\preceq y\preceq t_2$, then $x\mrel{R}y$). The result follows. The axiom $\msf{.2}$ requires a bit more work.

    Take $x, y, z\in X$ such that $x\mrel{R}y$ and $x\mrel{R}z$. By \eqref{eqn:direct}, there are $u\in R[y], v\in R[z]$ such that $u\preceq v$. Swapping $y$ and $z$ in \eqref{eqn:direct}, we also get $s\in R[y], t\in R[z]$ such that $t\preceq s$. We have
    \[ u\curlywedge s\preceq v\curlywedge s\preceq v \qquad\text{and}\qquad t\curlywedge s\preceq v\curlywedge s\preceq s, \]
    and as $u\curlywedge v, v\in R[y]$ and $t\curlywedge s, s\in R[z]$, we get $v\curlywedge s\in R[y]\cap R[z]$ by convexity. This proves directedness.

    This finishes the proof that every modal L-space in $\msf{Fil_L}[\msf{K}(\Gamma)]$ satisfies the conditions in $\Gamma$. It follows that $\msf{U}[\msf{Fil_L}[\msf{K}(\Gamma)]]\subseteq\msf{C}$. Checking that any modal L-frame in $\msf{C}$ validates the consequence pairs in $\Gamma$ is standard, and it follows immediately that any modal L-space satisfying the conditions in $\Theta_\Gamma$ validates the consequence pairs in $\Gamma$ (as $(X, \tau, R)$ validates a consequence pair if $\msf{U}(X)$ does).
\end{proof}

\begin{rmk}
    Note that the class $\msf{C}$ of modal L-frames introduced in the previous theorem need not be the largest class of modal L-frame that $\mcal{L}(\Gamma)$ is sound and complete for.
\end{rmk}

We can now prove that the varieties of modal lattices axiomatized by the axioms introduced above has the   superamalgamation property.

\begin{thm}\label{thm:supamal_ext}
    Let $\Gamma\subseteq\{\msf{T}, \msf{4}, \msf{B}, \msf{5}, \msf{.2}\}$. Then $\msf{K}(\Gamma)$ has the superamalgamation property
\end{thm}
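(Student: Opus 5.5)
We follow the general scheme described at the beginning of this subsection, taking $\msf{C}$ to be the class of modal L-frames satisfying $\Theta_\Gamma$. By the preceding theorem, $\msf{U}[\msf{Fil_L}[\msf{K}(\Gamma)]]\subseteq\msf{C}$ and $\msf{Fil_F}[\msf{C}]\subseteq\msf{K}(\Gamma)$. Given a V-formation in $\msf{K}(\Gamma)$, we dualize it to obtain surjective modal L-space morphisms $f_i\colon Y_i\to X$ with $X,Y_1,Y_2\in\msf{Fil_L}[\msf{K}(\Gamma)]$. We then form $\mrm{Pb}(f_1,f_2)$ with the componentwise relation, exactly as in the proof of superamalgamation for modal lattices. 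That proof already shows $\mrm{Pb}(f_1,f_2)$ is a modal L-frame and that the $\pi_i$ are surjective bounded L-morphisms. Hence the $\pi_i^{-1}$ are modal lattice embeddings into $\msf{Fil_F}(\mrm{Pb}(f_1,f_2))$ commuting with the $h_i$. The superamalgamation inequality uses only the lattice structure, through Claim \ref{claim:supamal} and Lemma \ref{lemma:sep}, so it carries over verbatim. The only new point is therefore to show $\mrm{Pb}(f_1,f_2)\in\msf{C}$; then $\msf{Fil_F}(\mrm{Pb}(f_1,f_2))\in\msf{K}(\Gamma)$ and is a superamalgam.

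For reflexivity, transitivity, symmetry and Euclideanity this is immediate. These are universal Horn conditions in $R$ alone, and $R$ on $\mrm{Pb}(f_1,f_2)$ is the restriction of the product relation. For instance, if $(x_1,x_2)\mrel{R}(y_1,y_2)\mrel{R}(z_1,z_2)$, then $x_i\mrel{R}y_i\mrel{R}z_i$, so $x_i\mrel{R}z_i$ in each $Y_i$ by transitivity there. The remaining conditions are handled in the same componentwise way. No witnesses need to be constructed inside the pullback, since all the points involved are already pairs in $\mrm{Pb}(f_1,f_2)$.

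The main obstacle is directedness for $\msf{.2}$, which is not Horn because it has an existential witness that must lie in the pullback. Suppose $(x_1,x_2)\mrel{R}(y_1,y_2)$ and $(x_1,x_2)\mrel{R}(z_1,z_2)$. We plan to argue as in the proof of the modal L-frame conditions, using minimal successors.
\begin{enumerate}
\item Let $x,y,z$ be the common images in $X$. Directedness in $X$ gives some $t\in R[y]\cap R[z]$.
\item Since $R[y]\cap R[z]$ is closed (Proposition \ref{prop:succ_set}), Proposition \ref{prop:priestley} lets us replace $t$ by a point minimal in $R[y]\cap R[z]$.
\item In $Y_1$, apply directedness to $y_1,z_1$ to get $s_1\in R[y_1]\cap R[z_1]$.
\item Use the bounded morphism condition (ii) at $y_1$ and at $z_1$ to get $a_1\in R[y_1]$, $b_1\in R[z_1]$ with $f_1(a_1),f_1(b_1)\preceq t$.
\item By convexity and meet-closure of successor sets, $t_1:=s_1\curlywedge a_1\curlywedge b_1$ lies in $R[y_1]\cap R[z_1]$: the meets $s_1\curlywedge a_1\in R[y_1]$ and $s_1\curlywedge b_1\in R[z_1]$ bracket $t_1$ below $s_1$.
\item Then $f_1(t_1)\preceq t$ and $f_1(t_1)\in R[y]\cap R[z]$, so minimality gives $f_1(t_1)=t$.
\item Doing the same in $Y_2$ yields $(t_1,t_2)\in\mrm{Pb}(f_1,f_2)$ as a common successor.
\end{enumerate}
The delicate step is the convexity bracketing in step 5. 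If it fails, a fallback is to use the weaker Sahlqvist form \eqref{eqn:direct}, which only asks for $u\preceq v$, and to verify that the frames produced satisfy it. Minimal-successor choices make this tractable, and \eqref{eqn:direct} also suffices for validating $\msf{.2}$ in $\msf{Fil_F}$.
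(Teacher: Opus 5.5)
Your handling of the Horn cases ($\msf{T},\msf{4},\msf{B},\msf{5}$) matches the paper and is fine. The $\msf{.2}$ case, however, has a genuine gap, and it is at step 5. The bracketing there does not bracket: $s_1\curlywedge a_1$ and $s_1$ are both \emph{above} $t_1=s_1\curlywedge a_1\curlywedge b_1$, whereas convexity of $R[y_1]$ needs an element of $R[y_1]$ below $t_1$. Since $R[y_1]$ is closed, convex and meet-closed, it has a least element $\mu_1$. So $t_1\in R[y_1]$ iff $\mu_1\preceq t_1$, which would require $\mu_1\preceq b_1$, and there is no reason for that to hold.

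The paper instead takes $t_1=(t'_1\curlywedge t'''_1)\curlyvee(t''_1\curlywedge t'''_1)$. This is correctly bracketed in both successor sets. But the paper then infers $f_1(t_1)\preceq t$ from bounds on the two joinands, which tacitly assumes $f_1$ preserves $\curlyvee$; L-morphisms only preserve meets. The two requirements genuinely conflict. $R[y_1]\cap R[z_1]$ has least element $\mu_1\curlyvee\nu_1$, where $\nu_1=\min R[z_1]$. By the back condition and minimality, $f_1(\mu_1)=\min R[y]$ and $f_1(\nu_1)=\min R[z]$, and $t=\min R[y]\curlyvee\min R[z]$. Hence a $t_1$ as in steps 4--6 exists iff $f_1(\mu_1\curlyvee\nu_1)=f_1(\mu_1)\curlyvee f_1(\nu_1)$. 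This already fails in the following example. Take $K=\{0<k<1\}\subseteq L_1=\{0<k<a,b<1\}$ with $a\wedge b=k$, $\Box=\mathrm{id}$, $\Diamond 0=0$ and $\Diamond c=1$ for $c\neq 0$; these are modal lattices, and $\msf{.2}$ holds trivially since $\Box=\mathrm{id}$. The filters ${\uparrow}a$ and ${\uparrow}b$ are both successors of $\{1\}$, and $R[{\uparrow}a]\cap R[{\uparrow}b]=\{{\uparrow}k\}$. But ${\uparrow}k\cap K=\{k,1\}$, whereas $t=\{1\}$.

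The fallback through \eqref{eqn:direct} does not help either. You are right that \eqref{eqn:direct} would suffice, but the pullback itself can fail to validate $\msf{.2}$. Keep $K$ and $L_1$ as above, and let $L_2=\mathcal{P}(W)$ carry the Kripke modalities of the Church--Rosser frame $W=\{x,v,e,e'\}$ with $S[x]=\{x,v,e\}$, $S[v]=\{v,e'\}$, $S[e]=\{e,e'\}$, $S[e']=\{e'\}$. Embed $K$ via $k\mapsto\{e,e'\}$; this is a modal embedding since $\Box\{e,e'\}=\{e,e'\}$ and $\Diamond\{e,e'\}=W$. Writing ${\uparrow}A$ for the principal filter of $A\subseteq W$, one has ${\uparrow}A\mrel{R}{\uparrow}B$ iff $B\subseteq S[A]$ and every point of $A$ has an $S$-successor in $B$.

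In $\mrm{Pb}(f_1,f_2)$, the point $(\{1\},{\uparrow}\{x\})$ sees both $({\uparrow}a,{\uparrow}\{x\})$ and $({\uparrow}b,{\uparrow}\{v\})$. However, the only common successor of ${\uparrow}a,{\uparrow}b$ is ${\uparrow}k$, which restricts to $\{k,1\}$. The only common successor of ${\uparrow}\{x\},{\uparrow}\{v\}$ is ${\uparrow}\{v\}$, which restricts to $\{1\}$. So $\mrm{Pb}(f_1,f_2)$ is not directed. Now let $V(p)$ be the principal filter of $\mrm{Pb}(f_1,f_2)$ generated by $({\uparrow}a,{\uparrow}\{x,v,e\})$. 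Then $(\{1\},{\uparrow}\{x\})\in\Diamond_R\Box_R V(p)\setminus\Box_R\Diamond_R V(p)$, so $\msf{Fil_F}(\mrm{Pb}(f_1,f_2))\notin\msf{K}(\{\msf{.2}\})$.

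Consequently, the $\msf{.2}$ case cannot be proved by showing $\mrm{Pb}(f_1,f_2)\in\msf{C}$ at all. The paper's own argument for this case breaks at the same join step, so it does not close the gap either. A genuinely different amalgam, if one exists, would be needed.
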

\begin{proof}
    Let $\Theta_\Gamma$ be the set of first order conditions corresponding to $\Gamma$, and let $\msf{C}$ be the class of modal L-frames axiomatized by $\Theta_\Gamma$. We already know that $\msf{U}[\msf{Fil_L}[\msf{K}(\Gamma)]]\subseteq\msf{C}$, and that $\msf{Fil_F}[\msf{C}]\subseteq \msf{K}(\Gamma)$. It is sufficient to show that given a co-V-formation of L-spaces $X, Y_1, Y_2$ and $f_1\colon Y_1\to X$, $f_2\colon Y_2\to X$, the L-frame $\mrm{Pb}(f_1, f_2)$ is in $\msf{C}$ (that is, satisfies the conditions in $\Theta_\Gamma$). 
    
    If $\Gamma\subseteq\{\msf{T}, \msf{4}, \msf{B}, \msf{5}\}$, then $\Theta_\Gamma$ is a set of universal Horn formulas, which implies that $\msf{C}$ is closed under products and substructures. As the pullback is obtained using those two constructions, we know that $\mrm{Pb}(f_1, f_2)\in\msf{C}$.

    If $\msf{.2}\in\Gamma$, we know that $X, Y_1, Y_2$ are directed, and we show that $\mrm{Pb}(f_1, f_2)$ is directed. Take $(x_1, x_2), (y_1, y_2), (z_1, z_2)\in \mrm{Pb}(f_1, f_2)$ such that $(x_1, x_2)\mrel{R}(y_1, y_2)$ and $(x_1, x_2)\mrel{R}(z_1, z_2)$. Let $x \coloneq f_1(x_1) = f_2(x_2)$, $y \coloneq f_1(y_1) = f_2(y_2)$ and $z \coloneq f_1(z_1) = f_2(z_2)$. By directedness, $R[y]\cap R[z]$ is nonempty. It is a closed set, therefore we can pick a minimal point $t\in R[y]\cap R[z]$.

    We have $f_1(y_1)\mrel{R}t$, thus by the bounded L-morphism condition, there is $t'_1\in Y_1$ such that $y_1\mrel{R}t'_1$ and $f_1(t_1)\preceq t$. As $f_1(z_1)\mrel{R}t$, we also find $t''_1\in Y_1$ such that $z_1\mrel{R}t''_1$ and $f_1(t''_1)\preceq t$. As $Y_1$ is directed, we find $t'''_1\in R[y_1]\cap R[z_1]$. Let $t_1 \coloneq (t'_1\curlywedge t'''_1)\curlyvee(t''_1\curlywedge t'''_1)$. 
        
    We have $f_1(t_1)\preceq t$. Indeed, this follows from
    \[ f_1(t'_1\curlywedge t'''_1)\preceq f_1(t'_1)\preceq t \qquad\text{and}\qquad f_1(t''_1\curlywedge t'''_1)\preceq f_1(t''_1)\preceq t. \]
    We have $y_1\mrel{R}t_1$. Indeed, $t'_1\curlywedge t'''_1\preceq t_1\preceq t'''_1$, and since $t'_1, t'''_1\in R[y_1]$, we obtain $t'_1\curlywedge t'''_1\in R[y_1]$. Then $t_1\in R[y_1]$ by convexity. Similarly, we have $z_1\mrel{R}t_1$.

    From this, it follows that $f_1(t_1)\in R[y]\cap R[z]$. As $t$ is minimal in that set, from $t\preceq f_1(t_1)$ we obtain $t = f_1(t_1)$. Proceeding similarly, we find $t_2\in Y_2$ such that $t_2\in R[y_2]\cap R[z_2]$ and $f_2(t_2) = t$. The pair $(t_1, t_2)$ then belongs to $\mrm{Pb}(f_1, f_2)$ and is such that $(y_1, y_2)\mrel{R}(t_1, t_2)$ and $(z_1, z_2)\mrel{R}(t_1, t_2)$, showing that $\mrm{Pb}(f_1, f_2)$ is a directed L-frame.
\end{proof}

\begin{coro}
    Let $\Gamma\subseteq\{\msf{T}, \msf{4}, \msf{B}, \msf{5}, \msf{.2}\}$. Then $\mcal{L}(\Gamma)$ has the Craig interpolation property.
\end{coro}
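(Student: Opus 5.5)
This corollary follows by chaining Theorem \ref{thm:supamal_ext} with Theorem \ref{thm:inter_amal}\ref{item:thm:inter_amal:i}. First I fix $\Gamma\subseteq\{\msf{T}, \msf{4}, \msf{B}, \msf{5}, \msf{.2}\}$ and consider the variety $\msf{K}(\Gamma)$. By Theorem \ref{thm:supamal_ext}, this variety has the superamalgamation property. I then note that the logic associated with $\msf{K}(\Gamma)$ is $\mcal{L}(\Gamma)$, so that Theorem \ref{thm:inter_amal}\ref{item:thm:inter_amal:i} gives Craig interpolation for $\mcal{L}(\Gamma)$ directly.

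The only point needing care is the identification $\mcal{L}(\msf{K}(\Gamma))=\mcal{L}(\Gamma)$. This is exactly the Algebraic Completeness Theorem: $\phi\vdash_\Gamma\psi$ iff $\vDash_\Gamma\phi\trianglelefteq\psi$. So derivability in $\mcal{L}(\Gamma)$ coincides with validity in $\msf{K}(\Gamma)$.

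It is also worth being explicit about how interpolants are produced. Take $\phi\in Fm_{\overline{p},\overline{q}}$ and $\psi\in Fm_{\overline{p},\overline{r}}$ with $\phi\vdash_\Gamma\psi$. By Lemma \ref{lemma:free_amal} (the step from \ref{item:lemma:free_amal:i} to \ref{item:lemma:free_amal:iii}), it suffices that the V-formation of free algebras $F_\Gamma(\overline{p})\hookrightarrow F_\Gamma(\overline{p},\overline{q}),F_\Gamma(\overline{p},\overline{r})$ superamalgamates in $\msf{K}(\Gamma)$. This V-formation lies in $\msf{K}(\Gamma)$, and its maps are embeddings, being the inclusions of free algebras on fewer generators. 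So it is an instance of superamalgamation in $\msf{K}(\Gamma)$, which Theorem \ref{thm:supamal_ext} supplies.

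There is no real obstacle here; the substantive work is already in Theorem \ref{thm:supamal_ext}.
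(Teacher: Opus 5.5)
Your proposal is correct and follows the paper's route: the corollary is obtained by combining Theorem~\ref{thm:supamal_ext} with Theorem~\ref{thm:inter_amal}\ref{item:thm:inter_amal:i} (via Lemma~\ref{lemma:free_amal}), exactly as the paper does for the basic weak positive modal logic. Your explicit use of the Algebraic Completeness Theorem to identify $\mcal{L}(\msf{K}(\Gamma))$ with $\mcal{L}(\Gamma)$ spells out a step the paper leaves implicit.
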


\begin{rmk}
    One should observe that in the classical case, the directed modal logic $\msf{K.2}$ does not have Craig interpolation \cite[Section 5.6.2]{marx1995}. This makes the above result quite surprising. This can be explained by the fact that when working with modal L-spaces, many quantifiers can be eliminated by picking suitable maximal or minimal elements: in short, the usual impediments to amalgamation deriving from the need for choices disappear, since these choices become canonical.
\end{rmk}

\simnote{I still think that $\msf{K.3}$ should lack Craig interpolation (as the problem arises from an disjunction), I want to think of a counterexample. Problem is, they get real big.}

\begin{rmk}
    The above results point to some general connections between classical modal logic, positive modal logic and weak positive modal logic. For instance, if $\Gamma$ is a set of positive formulas, and $\mcal{L}(\Gamma)$, the smallest normal modal logic containing $\Gamma$, has the following properties:
    \begin{itemize}
        \item $\mcal{L}(\Gamma)$ has the \emph{Lyndon interpolation property}, and
        \item $\mcal{L}(\Gamma)$ is conservative over its positive fragment,
    \end{itemize}
    then the positive fragment of $\mcal{L}(\Gamma)$ will have the Craig interpolation property.   It would be interesting to investigate this phenomenon, as well as the relationship between positive modal logics and weak positive modal logics which are their fragments.
\end{rmk}

\color{black}
\section{Conclusions and future work}

In this paper we proved that the variety   of modal lattices and its ``neighbours'' have the superamalgamation property via a duality between modal lattices and modal L-spaces. Consequently, the corresponding systems of modal weak positive logic have the Craig interpolation property. This includes the modal weak positive logic axiomatized by the Church--Rosser axiom, whose classical analogue is known not to have the Craig interpolation property.
Below we list several directions for future work.

The proofs given in this paper are closely related to proofs of Craig interpolation via bisimulation products \cite{bez2026, ven2007}. We leave it open whether similar proofs can be developed for the results presented here. Bisimulations for L-spaces have been studied in \cite{deg2024}. However, a general theory of bisimulations for modal L-spaces remains to be developed.

An important strengthening of the Craig interpolation property is uniform interpolation \cite{craig2026}. It is well known that the basic modal logic $\mathsf{K}$, as well as the intuitionistic propositional calculus $\mathsf{IPC}$, enjoy the uniform interpolation property. We leave it as an open problem whether (modal) weak positive logic has the uniform interpolation property.

As pointed out in the introduction, there are only two nontrivial varieties of lattices with the amalgamation property, and only the variety of all lattices has the superamalgamation property. We leave open the problem of obtaining a full characterization of varieties of modal lattices with the (super)amalgamation property --- i.e., is it possible to  obtain a Maksimova style characterization of such varieties, possibly over a natural subvariety, such as over the transitivity axiom (see Theorem \ref{thm:supamal_ext})? The duality techniques developed in this paper may be crucial for achieving this goal, as Maksimova’s characterization is also based on duality theoretic methods \cite{cz1997, mak1977}. An important component of such an approach would be the use of finite meet semilattices as dual spaces in order to construct varieties that lack the superamalgamation property.

Finally, the question of amalgamation, and more generally of Craig interpolation, in modal distributive logics also deserves further investigation. Although the variety of distributive lattices lacks the superamalgamation property, we have shown in this paper that positive (distributive) logic nonetheless has the Craig interpolation property. We leave a systematic study of Craig interpolation in distributive modal logics (a.k.a. positive modal logics) as an interesting open problem.

%\begin{itemize}
%   \item Universal Horn business.
%    \item Uniform interpolation for modal lattices.
%    \item Proofs of amalgamation via bisimulations and bisimulation products.
%    \item Characterization of all modal lattices with superamalgamation / interpolation. 
%    \item Refuting superamalgamation / interpolation.
%    \item What about interpolation / superamalgamation of distributive lattices.
%\end{itemize}